\newtheorem*{thm*}{Theorem}
\newtheorem{thm}{Theorem}
\newtheorem{lem}[thm]{Lemma}
\newtheorem{pro}[thm]{Proposition}
\newtheorem{conj}[thm]{Conjecture}
\newtheorem{ques}[thm]{Question}
\newcommand{\N}{\mathbb{N}}
\begin{document}

\title{On List Equitable Total Colorings of the Generalized Theta Graph}

\author{Jeffrey A. Mudrock\footnotemark[1], Max Marsh\footnotemark[1], and Tim Wagstrom\footnotemark[1]}

\footnotetext[1]{Department of Mathematics, College of Lake County, Grayslake, IL 60030.  E-mail:  {\tt {jmudrock@clcillinois.edu}}}

\maketitle

\begin{abstract}
In 2003 Kostochka, Pelsmajer, and West introduced a list analogue of equitable coloring called equitable choosability.  A $k$-assignment, $L$, for a graph $G$ assigns a list, $L(v)$, of $k$ available colors to each $v \in V(G)$, and an equitable $L$-coloring of $G$ is a proper coloring, $f$, of $G$ such that $f(v) \in L(v)$ for each $v \in V(G)$ and each color class of $f$ has size at most $\lceil |V(G)|/k \rceil$.  In 2018, Kaul, Mudrock, and Pelsmajer subsequently introduced the List Equitable Total Coloring Conjecture which states that if $T$ is a total graph of some simple graph, then $T$ is equitably $k$-choosable for each $k \geq \max \{\chi_\ell(T), \Delta(T)/2 + 2 \}$ where $\Delta(T)$ is the maximum degree of a vertex in $T$ and $\chi_\ell(T)$ is the list chromatic number of $T$.  In this paper we verify the List Equitable Total Coloring Conjecture for subdivisions of stars and the generalized theta graph.

\medskip

\noindent {\bf Keywords.}  graph coloring, total coloring, equitable coloring, list coloring, equitable choosability.

\noindent \textbf{Mathematics Subject Classification.} 05C15

\end{abstract}

\section{Introduction}\label{intro}

In this paper all graphs are nonempty, finite, simple graphs unless otherwise noted.  Generally speaking we follow West~\cite{W01} for terminology and notation.  The set of natural numbers is $\N = \{1,2,3, \ldots \}$.  For $m \in \N$, we write $[m]$ for the set $\{1, \ldots, m \}$.  If $G$ is a graph and $S \subseteq V(G)$, we use $G[S]$ for the subgraph of $G$ induced by $S$.  For $v \in V(G)$, we write $d_G(v)$ for the degree of vertex $v$ in the graph $G$ and $\Delta(G)$ for the maximum degree of a vertex in $G$, and we write $N_G(v)$ (resp. $N_G[v]$) for the neighborhood (resp. closed neighborhood) of vertex $v$ in the graph $G$.  If $e, w \in E(G)$ and $v \in V(G)$, we say $e$ and $v$ are incident if $v$ is an endpoint of $e$, and we say $e$ and $w$ are incident if $e$ and $w$ share an endpoint.  Also, $G^k$ denotes the $k^{th}$ power of graph $G$ (i.e. $G^k$ has the same vertex set as $G$ and edges between any two vertices within distance $k$ in $G$). 

\subsection{Total Coloring, Equitable Coloring, and List Coloring}

In this paper we study a conjecture that combines total coloring, equitable coloring, and list coloring.  So, we begin by briefly reviewing these three notions.

Given a graph $G$, in the classic vertex coloring problem we wish to color the elements of $V(G)$ with colors from the set $[m]$ so that adjacent vertices receive different colors, a so-called \emph{proper $m$-coloring}.  The \emph{chromatic number} of $G$, denoted $\chi(G)$, is the smallest $k$ such that a proper $k$-coloring of $G$ exists.  

\subsubsection{Total Coloring}

A \emph{total $m$-coloring} of $G$ is a labeling $f: V(G) \cup E(G) \rightarrow [m]$ where $f(u) \neq f(v)$ whenever $u$ and $v$ are adjacent or incident in $G$.  The \emph{total chromatic number} of a graph $G$, denoted $\chi''(G)$, is the smallest $k$ such that $G$ has a total $k$-coloring.  Clearly, for any graph $G$, $\chi''(G) \geq \Delta(G)+1$.  A famous open problem is the Total Coloring Conjecture (see~\cite{B65, V68}) which states that for any graph $G$, $\chi''(G) \leq \Delta(G)+2$.  See~\cite{L12} for some other applications of total coloring.

It is possible to rephrase total coloring in terms of classic vertex coloring.  Specifically, the \emph{total graph} of graph $G$, $T(G)$, is the graph with vertex set $V(G) \cup E(G)$ and vertices are adjacent in $T(G)$ if and only if the corresponding elements are adjacent or incident in $G$.  Then $G$ has a total $k$-coloring if and only if $T(G)$ has a proper $k$-coloring.  It follows that $\chi''(G) = \chi(T(G))$.

Given a graph $G$, one can construct $T(G)$ in two steps: first subdivide every edge of $G$ to get a new graph $H$, then take its square (i.e. $T(G)=H^2$).  For example, $T(P_m) = P_{2m-1}^2$.

\subsubsection{Equitable Coloring}

The study of equitable coloring began in 1964 with a conjecture of Erd{\"o}s~\cite{E64}, but it was formally introduced by Meyer in the 1970's~\cite{M73}.  An \emph{equitable $k$-coloring} of a graph $G$ is a proper $k$-coloring of $G$, $f$, such that the sizes of the color classes differ by at most one (where a proper $k$-coloring has exactly $k$ color classes).  In an equitable $k$-coloring, the color classes associated with the coloring are each of size $\lceil |V(G)|/k \rceil$ or $\lfloor |V(G)|/k \rfloor$.  We say that a graph $G$ is \emph{equitably $k$-colorable} if there exists an equitable $k$-coloring of $G$.  Many applications of equitable coloring exist, see for example~\cite{JR02, KJ06, P01, T73}.

Unlike classic vertex coloring, increasing the number of colors can make equitable coloring more difficult.  Indeed for any $m \in \N$, $K_{2m+1, 2m+1}$ is equitably $2m$-colorable, but it is not equitably $(2m+1)$-colorable.  In 1970 Hajn\'{a}l and Szemer\'{e}di~\cite{HS70} proved the 1964 conjecture of Erd{\"o}s: every graph $G$ has an equitable $k$-coloring when $k \geq \Delta(G)+1$. 

In 1994 Chen, Lih, and Wu~\cite{CL94} conjectured that the result of  Hajn\'{a}l and Szemer\'{e}di can be improved by 1 for most connected graphs.  Their conjecture is known as the $\Delta$-Equitable Coloring Conjecture, and it is still open.  Formally, the $\Delta$-Equitable Coloring Conjecture states: a connected graph $G$ is equitably $\Delta(G)$-colorable if it is different from $K_m$, $C_{2m+1}$, and $K_{2m+1,2m+1}$.  The $\Delta$-Equitable Coloring Conjecture has been proven true for interval graphs, bipartite graphs, outerplanar graphs, subcubic graphs, certain planar graphs, and several other classes of graphs (see~\cite{CL94, DW18, DZ18, L98, LW96, YZ97}). 

\subsubsection{List Coloring}

List coloring is another variation on classic vertex coloring that was introduced independently by Vizing~\cite{V76} and Erd\H{o}s, Rubin, and Taylor~\cite{ET79} in the 1970's.  In list coloring, we associate with graph $G$ a \emph{list assignment}, $L$, that assigns to each vertex $v \in V(G)$ a list, $L(v)$, of available colors. Graph $G$ is said to be \emph{$L$-colorable} if there exists a proper coloring $f$ of $G$ such that $f(v) \in L(v)$ for each $v \in V(G)$ (we refer to $f$ as a \emph{proper $L$-coloring} of $G$).  A list assignment $L$ is called a \emph{k-assignment} for $G$ if $|L(v)|=k$ for each $v \in V(G)$.  We say $G$ is \emph{k-choosable} if $G$ is $L$-colorable whenever $L$ is a $k$-assignment for $G$.  The \emph{list chromatic number} of $G$, denoted $\chi_\ell(G)$, is the smallest $k$ for which $G$ is $k$-choosable.  Since a $k$-assignment can assign the same $k$ colors to every vertex of a graph, $\chi(G) \leq \chi_\ell(G)$.  

\subsection{Equitable Total Coloring and List Equitable Coloring}

We now briefly review work that has been done on equitably coloring total graphs, and we review a list analogue of equitable coloring.  We will then spend the remainder of the paper focused upon list equitable total coloring.

\subsubsection{Equitable Total Coloring} 

The study of equitable total coloring was initiated by Fu in 1994~\cite{F94}.  Specifically, Fu introduced the Equitable Total Coloring Conjecture which we now state.   

\begin{conj}[Equitable Total Coloring Conjecture~\cite{F94}] \label{conj: ETCC}
For every graph $G$, $T(G)$ has an equitable $k$-coloring for each $k \geq \max\{\chi(T(G)), \Delta(G)+2 \}$.
\end{conj}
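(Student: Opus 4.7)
The plan is to outline a structured attack on Conjecture~\ref{conj: ETCC}. Since it has remained open since 1994, no short argument seems available, so I would first aim at tractable subclasses and then at a general reduction.

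Writing $H=T(G)$, observe that $\Delta(H)=2\Delta(G)$, whereas the required number of colors is $k\geq \max\{\chi''(G),\Delta(G)+2\}$, which is far smaller than the Hajn\'al--Szemer\'edi threshold $\Delta(H)+1=2\Delta(G)+1$ available for free. Hence a direct application of Hajn\'al--Szemer\'edi is insufficient, and any proof must exploit the special local structure of $H$: its vertex set partitions into an ``edge layer'' and a ``vertex layer'' of $G$, with adjacencies in $H$ governed entirely by incidences in $G$. In particular the edge layer induces the line graph $L(G)$ and the vertex layer is independent in $H$, which gives a useful bipartite-like decomposition to exploit during swaps.

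The first step would be to settle structured special cases where $\chi''(G)$ is well understood: paths, cycles, complete graphs, trees, complete bipartite graphs, stars and subdivisions of stars, and the generalized theta graph (which is the target of the present paper). In each case one produces an explicit total $k$-coloring of the required size and verifies, either by construction or by Kempe-chain rebalancing between unequal color classes, that the color class sizes differ by at most one. For trees, a rooted sweep that assigns colors to vertices and edges in order while maintaining a running tally of class sizes should suffice; for the theta graph one would align the total colorings of the parallel internal paths so that surplus in one color on one path can be absorbed by a deficit on another.

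For the general case, my approach would be to start with a total $k$-coloring $f$ (assumed to exist; when $k=\Delta(G)+2$ this would be the content of the Total Coloring Conjecture, and when $k=\chi''(G)$ it is tautological) and then iteratively rebalance. Whenever a class $C_i$ is too large and $C_j$ too small, one seeks a Kempe $(i,j)$-swap, restricted to a connected component of $H[C_i\cup C_j]$, that shifts exactly one element from $C_i$ to $C_j$; the slack $k-\Delta(G)\geq 2$ is meant to guarantee enough spare colors at each element of $H$ to reroute such swaps when a direct one fails. The main obstacle---and the reason this conjecture is hard---is that no single swap is guaranteed to preserve the equitability gains made in previous steps, and components of $H[C_i\cup C_j]$ can be large and irregular. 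Overcoming this seems to require either a global potential/discharging argument, or an inductive deletion argument on a carefully chosen vertex or edge of $G$ that reduces both $\Delta(G)$ and $|V(H)|$ commensurately so that an equitable total $k$-coloring of $G-e$ extends to $G$. I would expect this discharging/inductive step to be the true bottleneck, which is presumably why the authors here confine themselves to specific graph families.
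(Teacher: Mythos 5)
There is a genuine gap here, and it starts with the status of the statement itself: what you were asked about is the Equitable Total Coloring Conjecture of Fu, which is stated in the paper only as background and is \emph{not} proved there --- it has been open since 1994, and the paper's actual results concern the stronger list analogue (the LETCC) restricted to subdivisions of stars and generalized theta graphs. Your submission acknowledges this and is, accordingly, a research plan rather than a proof: no step of it is carried out. The part you correctly identify as the bottleneck --- turning an arbitrary total $k$-coloring into an equitable one by Kempe $(i,j)$-swaps or by discharging/induction --- is exactly the open content of the conjecture, and you give no argument that a component of $H[C_i\cup C_j]$ can ever be chosen so that the swap moves the class sizes in the right direction (a Kempe swap exchanges \emph{all} elements of the component, which can make the imbalance worse), nor that the slack $k-\Delta(G)\geq 2$ can be converted into such a guarantee. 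So the proposal cannot be credited as a proof of the statement, nor does it match anything the paper does.

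It is also worth noting that the technique the paper actually uses for the cases it does settle is different in kind from what you sketch. Rather than rebalancing a given coloring, the authors delete a carefully ordered set $S=\{x_1,\ldots,x_t\}$ of vertices of $T(G)$ satisfying $|N_G(x_i)-S|\leq k-i$, obtain an equitable (list) coloring of the remainder by induction, by Kierstead--Kostochka's bounded-degree theorem, or by known results on path and cycle squares, and then extend greedily with $t$ distinct colors via Lemma~\ref{lem: KPW}. That extension lemma works verbatim for list assignments, where Kempe-chain arguments are unavailable because different vertices see different color sets; if you want to make progress on either Conjecture~\ref{conj: ETCC} or the LETCC for a new family, identifying such a deletion set $S$ with the required degree decay is the concrete step your plan is missing.
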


The ``$\Delta(G)+2$'' is required because Fu~\cite{F94} found an infinite family of graphs $G$ with $\chi''(G)= \Delta(G)+1$ but $T(G)$ is not equitably $(\Delta(G)+1)$-colorable (cf. Proposition~2.10 in~\cite{F94}).  Note that if the Total Coloring Conjecture is true, we would have $\max\{\chi''(G), \Delta(G)+2 \} = \Delta(G)+2$.  

Fu~\cite{F94} showed that Conjecture~\ref{conj: ETCC} holds for complete bipartite graphs, complete $t$-partite graphs of odd order, trees, and certain split graphs.  Equitable total coloring has also been studied for graphs with maximum degree 3~\cite{W02}, joins of certain graphs~\cite{GM12,GZ09,ZW05}, the Cartesian product of cycles~\cite{CX09}, and the corona product of cubic graphs~\cite{F15}.

\subsubsection{List Equitable Coloring}

In 2003, Kostochka, Pelsmajer, and West introduced a list analogue of equitable coloring called equitable choosability~\cite{KP03}.  Suppose that $L$ is a $k$-assignment for graph $G$.  An \emph{equitable $L$-coloring} of $G$ is a proper $L$-coloring, $f$, of $G$ such that $f$ uses no color more than $\lceil |V(G)|/k \rceil$ times~\footnote{When it comes to equitable choosability, the word equitable indicates that no color is used excessively often.}.  When an equitable $L$-coloring of $G$ exists we say that $G$ is \emph{equitably $L$-colorable}.  Graph $G$ is \emph{equitably $k$-choosable} if $G$ is equitably $L$-colorable whenever $L$ is a $k$-assignment for $G$. 

We now mention a convention used in this paper.  Suppose that $H$ is a subgraph of $G$, and suppose that $L$ is a $k$-assignment for $G$.   When there is an equitable $L'$-coloring of $H$ where $L'$ is the $k$-assignment for $H$ defined by $L'(v)=L(v)$ for each $v \in V(H)$, we say $H$ has an equitable $L$-coloring.  Notice an equitable $L$-coloring of $H$ requires color classes of size at most $\lceil{|V(H)|/k}\rceil$ which may be more restrictive than the bound required for an equitable $L$-coloring of $G$.

It is important to note that, similar to equitable coloring, making the lists larger may make equitable list coloring more difficult.  Indeed, $K_{1,9}$ is equitably 4-choosable, but it is not equitably 5-choosable.  Also, equitable $k$-choosability does not imply equitable $k$-colorability unless $k=2$.  Indeed $K_{1,6}$ is equitably 3-choosable, but it is not equitably 3-colorable (see~\cite{MC19}).  In~\cite{KP03}, there are (perhaps surprising) conjectures that are list analogues of  Hajn\'{a}l and Szemer\'{e}di's result and the $\Delta$-Equitable Coloring Conjecture.

\begin{conj}[\cite{KP03}] \label{conj: KPW1}
Every graph $G$ is equitably $k$-choosable when $k \geq \Delta(G)+1$.
\end{conj}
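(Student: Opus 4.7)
The plan is to adapt the potential-function induction used by Hajn\'al and Szemer\'edi in~\cite{HS70} to the list setting. I would induct on $|V(G)|$, with the base case handled directly for small graphs. For the inductive step, given a $k$-assignment $L$ for $G$ with $k \geq \Delta(G)+1$, I would delete a carefully chosen vertex $v$---say, of minimum degree, or one whose incident neighborhood lists have large union---obtain an equitable $L$-coloring $f$ of $G-v$ by induction, and then try to extend $f$ to $v$.

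The extension step is where the heart of the argument lies. If some color in $L(v)$ is both available at $v$ and underused relative to $\lceil |V(G)|/k \rceil$, the extension is immediate. Otherwise, every color in $L(v)$ is either forbidden by a neighbor of $v$ or already saturated. In the classical setting one performs a Kempe-type swap along a sequence of color classes to free a slot. I would set up an auxiliary digraph whose vertices are the $k$ colors and whose arcs record, for each ordered pair $(i,j)$, the existence of a vertex currently in class $i$ whose list contains $j$; an augmenting walk in this digraph terminating at a color from $L(v)$ would yield the desired equitable $L$-coloring. The counting argument should exploit $k \geq \Delta(G)+1$ to show that if no such walk exists, then $v$ has more than $\Delta(G)$ neighbors, a contradiction.

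The main obstacle, and the reason this conjecture remains open, is that the list restriction destroys the symmetric structure exploited in the Hajn\'al--Szemer\'edi proof. A swap that moves a vertex $u$ from class $i$ to class $j$ requires $j \in L(u)$, and a cascade of such swaps requires a long compatibility chain in the union of the lists, with no a priori guarantee such a chain exists. I expect the reachability/discharging bookkeeping on the auxiliary digraph to be the decisive technical step, and I anticipate that to make it go through one may have to strengthen the inductive hypothesis to track not only an equitable $L$-coloring but a coloring in which certain color classes are ``swap-ready'' in a quantitative sense, e.g.\ containing many vertices whose lists are rich enough to absorb displacements.

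Failing a direct attack, my fallback plan is to prove the conjecture on structured subclasses: $d$-degenerate graphs (via a greedy argument starting from low-degree vertices), graphs of large girth (where short augmenting structures do not collide), or graphs where $\Delta(G)$ is large enough relative to other parameters that probabilistic nibble methods apply. Assembling such partial results in the spirit of~\cite{KP03} seems a plausible stepping-stone toward the full conjecture, even if a single unified argument remains elusive.
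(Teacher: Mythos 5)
You have set out a research programme, not a proof, and the statement you are addressing is in fact an open conjecture: the paper states Conjecture~\ref{conj: KPW1} as a conjecture of Kostochka, Pelsmajer, and West~\cite{KP03} and offers no proof of it, so there is no ``paper proof'' to match against. The genuine gap in your proposal is exactly the step you flag yourself: the extension of an equitable $L$-coloring of $G-v$ to $v$ via an augmenting walk in the auxiliary color digraph. In the Hajn\'al--Szemer\'edi setting every vertex may receive every color, so an arc $(i,j)$ can be created or traversed whenever the relevant class sizes permit; in the list setting a vertex of class $i$ can only move to class $j$ if $j$ lies in its own list, and nothing in the hypothesis $k \geq \Delta(G)+1$ forces the required compatibility chain to exist. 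Your claimed counting step (``if no such walk exists, then $v$ has more than $\Delta(G)$ neighbors'') is asserted, not argued, and it is precisely the assertion that fails to follow: all colors of $L(v)$ could be saturated by classes consisting of vertices whose lists are disjoint from $L(v)$, so no displacement terminating in $L(v)$ need be available, and no contradiction with $d_G(v) \leq \Delta(G)$ is reached. Strengthening the induction to carry ``swap-ready'' classes is a plausible idea, but you do not specify an invariant that is simultaneously strong enough to guarantee the augmenting walk and weak enough to be re-established after each step, which is where all known attempts stall.

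Your fallback results are also not new ground: \cite{KP03} already proves the conjecture for forests, connected interval graphs, and 2-degenerate graphs with maximum degree at least 5, and Kierstead and Kostochka~\cite{KK13} (Theorem~\ref{thm: KKresult} in this paper) give the best known general bounds, which still exceed $\Delta(G)+1$ once $\Delta(G) \geq 8$. So the proposal neither proves the conjecture nor extends the known partial results; it is a reasonable summary of the difficulty, but it should not be presented as a proof.
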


\begin{conj}[\cite{KP03}] \label{conj: KPW2}
A connected graph $G$ is equitably $k$-choosable for each $k \geq \Delta(G)$ if it is different from $K_m$, $C_{2m+1}$, and $K_{2m+1,2m+1}$.
\end{conj}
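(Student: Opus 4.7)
The final statement is Conjecture~\ref{conj: KPW2} of Kostochka, Pelsmajer, and West; since this (the list analogue of the still-open $\Delta$-Equitable Coloring Conjecture) is itself open, what follows is a realistic \emph{plan of attack} rather than a complete outline.

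The natural approach is strong induction on $|V(G)|$. Fix a connected $G$ distinct from $K_m$, $C_{2m+1}$, and $K_{2m+1,2m+1}$, let $k \geq \Delta(G)$, and let $L$ be an arbitrary $k$-assignment. I would first dispose of the base cases (say $|V(G)| \leq 2k+2$) by direct case analysis, paying careful attention to verifying that the three listed families are in fact the only obstructions that arise.

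For the inductive step, I would try a ``color-and-peel'' strategy. The goal is to find a color $c$ and an independent set $S \subseteq V(G)$ with $|S| \leq \lceil |V(G)|/k \rceil$ such that $c \in L(v)$ for every $v \in S$, such that $G - S$ is not one of the exceptional graphs, and such that $\Delta(G-S) \leq k$. One then applies induction to $G - S$ (possibly after removing $c$ from the appropriate lists) to get an equitable coloring $f$, and extends by setting $f(v) = c$ for each $v \in S$. With $|S|$ chosen carefully, the resulting $c$-class and all other classes stay within the ceiling bound. A secondary strategy, to be used when no such $(c,S)$ exists, would be to identify a small ``core'' substructure forced by the list $L$ and hand-color it first.

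The main obstacle, and the reason the conjecture is genuinely hard, is that the rebalancing machinery behind proofs of the $\Delta$-Equitable Coloring Conjecture does not transfer to the list setting: one cannot swap a vertex's color if the target color is not in its list. The worst case I would expect is when $G$ admits some proper $L$-coloring whose largest class exceeds $\lceil |V(G)|/k \rceil$ by exactly one, and no vertex in that class has any alternative list color avoiding its neighbors, so no swap is available. Overcoming this would likely require either a Kierstead--Kostochka--Pelsmajer style swap-chain argument refined to respect lists (swapping along paths of vertices whose lists genuinely permit the exchange), or a strengthened inductive invariant that maintains a reservoir of ``free'' list colors at each vertex throughout the induction. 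This is precisely the point at which analogous proofs for specific graph classes (including the restricted families pursued in the present paper) become delicate, and absent a new idea of this type the inductive step cannot be closed in full generality.
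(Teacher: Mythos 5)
You are right that this statement is not something you (or anyone) can currently prove: Conjecture~\ref{conj: KPW2} is quoted from Kostochka, Pelsmajer, and West and is still open. The paper does not prove it either; it only records the conjecture and then verifies the related List Equitable Total Coloring Conjecture for specific graph classes (subdivisions of stars and generalized theta graphs). So there is no proof in the paper to compare your proposal against, and your explicit acknowledgment that you are offering a plan of attack rather than a proof is the correct reading of the situation.

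That said, one concrete remark on the plan itself. Your ``color-and-peel'' step --- remove an independent set $S$ with $|S| \leq \lceil |V(G)|/k \rceil$ all sharing a color $c$, color $G-S$ by induction, then give $S$ the color $c$ --- has a bookkeeping problem in the list setting: to guarantee $c$ is not also used in $G-S$ (which could push the $c$-class over the ceiling), you must delete $c$ from the remaining lists, and then the inductive hypothesis no longer applies because the lists have size $k-1 < \Delta(G-S)+$ (whatever bound you need). This is exactly why the technique actually used in \cite{KP03}, and as Lemma~\ref{lem: KPW} in this paper, peels off a set $S$ of $k$ (more generally $t$) vertices ordered $x_1,\ldots,x_t$ with $|N_G(x_i)-S| \leq k-i$, equitably colors $G-S$ by induction with the \emph{same} $k$-assignment, and then colors $S$ greedily with $t$ \emph{distinct} colors, so that each color's usage increases by at most one and no list ever shrinks below $k$. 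Your second strategy (list-respecting swap chains \`a la Kierstead--Kostochka) is indeed where the real difficulty lies, and the absence of a working rebalancing argument is precisely why the conjecture remains open; so the honest conclusion of your proposal --- that the inductive step cannot currently be closed in full generality --- is the right one.
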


In~\cite{KP03} it is shown that Conjectures~\ref{conj: KPW1} and~\ref{conj: KPW2} hold for forests, connected interval graphs, and 2-degenerate graphs with maximum degree at least 5.  It was also shown in~\cite{KP03} that if $G$ is a graph and $k \geq \max\{\Delta(G), |V(G)|/2 \}$, then $G$ is equitably $k$-choosable unless $G$ contains $K_{k+1}$ or is $K_{k,k}$ with $k$ odd in the latter case. Thus, Conjecture~\ref{conj: KPW2} is true for small graphs (at most $2k$ vertices).  Conjectures~\ref{conj: KPW1} and~\ref{conj: KPW2} have also been verified for outerplanar graphs~\cite{ZB10}, powers of paths and cycles~\cite{KM18}, series-parallel graphs~\cite{ZW11}, and certain planar graphs (see~\cite{LB09, ZB08, ZB15}).  In 2013, Kierstead and Kostochka made substantial progress on Conjecture~\ref{conj: KPW1}, as follows.

\begin{thm}[\cite{KK13}] \label{thm: KKresult}
If $G$ is any graph, then $G$ is equitably $k$-choosable whenever
\[
k \geq
\begin{cases}
\Delta(G)+1 & \text{if} \; \Delta(G) \leq 7 \\
\Delta(G) + \frac{\Delta(G)+6}{7} & \text{if } \; 8 \leq \Delta(G) \leq 30 \\
\Delta(G) +   \frac{\Delta(G)}{6} & \text{if } \; \Delta(G) \geq 31.
\end{cases}
\]
\end{thm}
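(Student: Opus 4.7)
The plan is to proceed by strong induction on $|V(G)|$, treating the three regimes of $\Delta(G)$ in parallel so that each has just enough slack between $k$ and $\Delta(G)$ to run a common extension argument. The base cases are graphs with $|V(G)| \leq k$, for which any proper $L$-coloring (which exists whenever $k \geq \chi_\ell(G)$, and in particular when $k \geq \Delta(G)+1$) is automatically equitable since no color class can exceed size $1 \leq \lceil |V(G)|/k \rceil$.

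For the inductive step, I would partition $V(G)$ into a small reserve set $R$ and a bulk $B = V(G) \setminus R$, with $|R|$ chosen so that $|B|$ is a multiple of $k$ (i.e.\ $|R| \equiv |V(G)| \pmod{k}$). Applying the inductive hypothesis to $G[B]$ with each list $L(v)$ inherited from $L$ would yield an equitable $L$-coloring of $B$ in which every color class has size exactly $|B|/k = \lfloor |V(G)|/k \rfloor$. I would then extend to $R$ by building a bipartite auxiliary graph $H$ on $R$ and $[k]$, with $r \in R$ joined to $c$ whenever $c \in L(r)$ and $c$ is not used on any already-colored neighbor of $r$, and searching for a matching saturating $R$ that uses only colors whose current class size is strictly less than $\lceil |V(G)|/k \rceil$. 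A direct count shows each $r$ has at least $k - \Delta(G)$ allowable colors, so the extra slack guaranteed in each regime of the theorem is what is needed to verify a Hall-type condition for the matching.

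The main obstacle, and the source of the regime-dependent thresholds in the statement, is producing an $R$ whose removal preserves both the maximum-degree hypothesis and the list-size hypothesis required by the induction, while also avoiding that some color gets saturated too early in $B$. For small $\Delta(G) \leq 7$ the reserve can be made tiny and one can proceed by local case analysis, recovering the Hajn\'{a}l--Szemer\'{e}di-style bound $k \geq \Delta(G)+1$. For $8 \leq \Delta(G) \leq 30$, the additional $(\Delta(G)+6)/7$ colors of slack allow $R$ to contain multiple high-degree vertices while the matching step still succeeds. For $\Delta(G) \geq 31$, the slack $\Delta(G)/6$ becomes large enough to accommodate a swap-based adjustment, in which two vertices in $B \cup R$ trade colors to relieve an overloaded class, or even a probabilistic allocation inside $B$ via a nibble/rounding argument.

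I expect the hardest part to be unifying these three arguments into a single inductive framework. In particular, when removing $R$ causes $\Delta(G[B])$ to drop, the graph $G[B]$ may lie in a different regime than $G$, so the hypothesis on $k$ needs to be re-verified against the appropriate piecewise bound; and edge cases where $G[B]$ becomes empty, disconnected, or triggers the $K_m$/$C_{2m+1}$/$K_{2m+1,2m+1}$ obstructions familiar from Conjecture~\ref{conj: KPW2} must be handled separately. Choosing the reserve set $R$ robustly across all three regimes, so that every induced subproblem still satisfies the hypothesis of the theorem, is the delicate quantitative step underlying the specific constants $(\Delta+6)/7$ and $\Delta/6$ appearing in the statement.
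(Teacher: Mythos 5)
This statement is not proved in the paper at all: it is quoted verbatim from Kierstead and Kostochka~\cite{KK13}, and the authors use it as a black box (most directly through Lemma~\ref{lem: KPW}-style arguments where $G-S$ has small maximum degree). So there is no ``paper proof'' to match your proposal against; the question is whether your sketch would itself constitute a proof, and it would not. Two concrete gaps. First, your bookkeeping after coloring the bulk $B$ is wrong: the inductive hypothesis gives an equitable $L$-coloring of $G[B]$, which only bounds each color class above by $\lceil |B|/k\rceil$; it does not give classes of size exactly $|B|/k$ (indeed, with lists, a given color need not appear in any list, so equal class sizes are impossible to guarantee). Hence your later accounting of which colors are ``saturated'' when extending to $R$ has no basis. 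Second, and more seriously, the Hall-type extension step fails precisely in the regime the theorem is about. Your count gives each $r\in R$ at least $k-\Delta(G)$ admissible colors; for $k=\Delta(G)+1$ that is a single color, these singletons can coincide across $R$, and the admissible colors may already be used $\lceil |V(G)|/k\rceil$ times, so Hall's condition simply cannot be verified from this count. The device that makes such an extension work (remove a set $S$ of exactly $k$ vertices ordered so that $|N_G(x_i)-S|\le k-i$, then color $x_1,\dots,x_k$ with distinct colors, as in Lemma~\ref{lem: KPW}) requires the existence of such an ordered set, which is exactly what is hard to arrange in a general graph; the phrases ``local case analysis,'' ``swap-based adjustment,'' and ``nibble/rounding'' are placeholders for the entire content of the Kierstead--Kostochka argument, which is a long and delicate recoloring/discharging-style proof, not a routine induction. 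Your sketch also never indicates where the specific thresholds $(\Delta+6)/7$ and $\Delta/6$ would arise, and the exceptional graphs $K_m$, $C_{2m+1}$, $K_{2m+1,2m+1}$ you worry about are irrelevant once $k\ge\Delta(G)+1$. If you need this theorem, cite~\cite{KK13}; reproving it is a research-level undertaking, not an exercise in extending a partial coloring.
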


\subsection{List Equitable Total Coloring}

In 2018, Kaul, Pelsmajer, and the first author, began studying the equitable choosability of total graphs~\cite{KM18} which was originally suggested by Nakprasit~\cite{N02}.  Motivated by the Equitable Total Coloring Conjecture (Conjecture~\ref{conj: ETCC}), they introduced the List Equitable Total Coloring Conjecture (LETCC for short).  

\begin{conj}[{\bf List Equitable Total Coloring Conjecture}~\cite{KM18}] \label{conj: LETCC}
For every graph $G$, $T(G)$ is equitably $k$-choosable for each $k \geq \max \{\chi_\ell(T(G)), \Delta(G)+2 \}$.
\end{conj}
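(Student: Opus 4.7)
The plan is to attempt a proof by strong induction on $|V(G)|+|E(G)|$. For the base cases, I would handle graphs with $\Delta(G)\le 2$: here $G$ is a disjoint union of paths and cycles, so $T(G)$ is a disjoint union of squares of paths and cycles, whose equitable choosability was established in~\cite{KM18}. The key structural observation for the inductive step is that $\Delta(T(G))=2\Delta(G)$, so the hypothesis $k\ge\Delta(G)+2=\Delta(T(G))/2+2$ is well below the $\Delta(T(G))+1$ threshold of Theorem~\ref{thm: KKresult}; consequently any proof must genuinely exploit the representation $T(G)=H^2$ where $H$ is the subdivision of $G$, rather than treating $T(G)$ as a generic graph of its maximum degree.

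For the inductive step I would seek a reducible substructure in $G$: a pendant edge, an edge realizing $\Delta(G)$, or a low-degree vertex. Removing such a substructure yields a smaller graph $G'$ with $T(G')\subseteq T(G)$; by induction, $T(G')$ admits an equitable $L'$-coloring, where $L'$ is the restriction of $L$. This partial coloring must then be extended to the vertices of $T(G)\setminus T(G')$. Each new vertex has at most $2\Delta(G)$ previously colored neighbors in $T(G)$ and a list of size $k\ge\Delta(G)+2$, so many proper extensions exist; the real constraint is the ceiling $\lceil(|V(G)|+|E(G)|)/k\rceil$ on each color class. When a greedy extension violates this ceiling, the remedy is to rebalance via Kempe-chain swaps between an over-full color class and one strictly below capacity, using the list-chromatic assumption $k\ge\chi_\ell(T(G))$ to guarantee that the swaps can be carried out within the available lists.

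The main obstacle, and the reason this conjecture remains open, is precisely the rebalancing step in full generality. Kempe chains in $T(G)$ can be long and globally interacting because $T(G)$ contains the line graph $L(G)$ as an induced subgraph, inside which bichromatic swaps can propagate across large portions of $G$. A proof of the full conjecture would therefore need either (i) a discharging argument on $G$ that identifies, in every non-trivial instance, a reducible configuration around which Kempe swaps are guaranteed to terminate locally, or (ii) a refined direct extension that reserves ``spare'' colors throughout the induction so that no swap is ever needed. The cases verified in this paper---subdivisions of stars and the generalized theta graph---are tractable because the near-tree topology of the underlying $G$ forces Kempe chains in $T(G)$ to be short and to terminate at predictable vertices; extending this control to arbitrary $G$ is the genuinely hard step, and I would expect the bulk of the work in a full proof to lie in establishing such a general structural lemma.
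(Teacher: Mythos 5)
The statement you were asked about is a \emph{conjecture} (the List Equitable Total Coloring Conjecture, stated in the paper as Conjecture~\ref{conj: LETCC} and attributed to~\cite{KM18}); the paper does not prove it, and it remains open. Your proposal is therefore correctly calibrated in one important sense: you explicitly acknowledge that the rebalancing step in full generality is missing, and indeed that missing step is the whole content of the conjecture. But for that same reason what you have written is a research outline, not a proof. The concrete gap is exactly the one you name yourself: after extending an equitable $L'$-coloring of $T(G')$ to the new vertices of $T(G)$, there is no argument that the color-class ceiling $\lceil (|V(G)|+|E(G)|)/k \rceil$ can always be restored, and the appeal to Kempe-chain swaps ``guaranteed by $k \ge \chi_\ell(T(G))$'' is not a valid inference --- list-chromatic bounds give no control over the length, interaction, or list-compatibility of bichromatic swaps, and no lemma of the required form is known. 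So the proposal cannot be judged correct as a proof of the statement.

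It is also worth noting that your description of why the special cases succeed does not match how the paper actually handles them. The verified cases (subdivisions of stars in Section~\ref{stars}, generalized theta graphs in Section~\ref{theta}) are not proved by controlling Kempe chains in near-tree topologies. Instead the paper works with $T(G)$ realized as a square $H^2$ of a subdivision, and repeatedly applies Lemma~\ref{lem: KPW}: one removes a carefully chosen set $S$ of $t \le k$ vertices whose residual degrees satisfy $|N_G(x_i)-S| \le k-i$, colors $G-S$ equitably (by Theorem~\ref{thm: KKresult}, Proposition~\ref{cor: pathsquare}, or induction on the path lengths), and then colors $S$ with $t$ distinct colors, so no rebalancing is ever needed; a few small cases ($\Theta(2,4,4)$, $\Theta(2,4,4,4)$, and the $[\Theta(2,4,\ldots,4)]^2$ and $[\Theta(4,\ldots,4)]^2$ families) are handled by explicit layered colorings. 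If you want to push toward the conjecture itself, the productive direction suggested by the paper's method is to look for a general reducible set $S$ satisfying the hypotheses of Lemma~\ref{lem: KPW} in $T(G)$ for arbitrary $G$, rather than a swap-based repair scheme.
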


Note that since $\Delta(T(G)) = 2 \Delta(G)$, the LETCC is saying something stronger about total graphs than Conjectures~\ref{conj: KPW1} and~\ref{conj: KPW2} when $\Delta(G) > 2$.  Also, Fu's infinite family of graphs $G$ with $\chi''(G)= \Delta(G)+1$ and $T(G)$ is not eqitably $\Delta(G)+1$-colorable also has the property that $\chi_\ell(T(G))=\Delta(G)+1$ and $T(G)$ is not equitably $(\Delta(G)+1)$-choosable.  So the LETCC would be sharp if true.  The LETCC has been verified for all graphs $G$ with $\Delta(G) \leq 2$, stars, double stars, and trees of maximum degree 3 (see~\cite{KM18, M18}).

\subsection{Outline of Results and an Open Question}

In this paper we study list equitable total coloring of generalized theta graphs.  Suppose that $m \in \N$ and $l_1, \ldots, l_m \in \N$ satisfy $l_1 \leq \cdots \leq l_m$. Then, the \emph{generalized theta graph} $\Theta(l_1, \ldots, l_m)$ is the equivalence class of graphs consisting of two vertices joined by internally disjoint paths of lengths $l_1, \ldots, l_m$.  We will assume that $l_2 \geq 2$ when $m \geq 2$ since we will only be considering simple graphs in this paper.\footnote{For the remainder of this paper, whenever we see $\Theta(l_1, \ldots, l_m)$, we will always assume $m, l_1, \ldots, l_m \in \N$ with $l_1 \leq \cdots \leq l_m$ and $l_2 \geq 2$ when $m \geq 2$.}  Studying list equitable total coloring of generalized theta graphs is quite natural as theta graphs and generalized theta graphs have many interesting properties that have been studied by many researchers (see~\cite{BF16, BH01, CM15, ET79, LB16, LB19, SJ18}).  For this paper, we prove a positive answer to the question: Does the LETCC hold for generalized theta graphs?

In order to build up to the generalized theta graph, in Section~\ref{stars} we study list equitable total coloring of subdivisions of stars.  We say that $H$ is a \emph{subdivision of $G$} if $H$ is a graph obtained from $G$ by replacing the edges of $G$ with internally disjoint paths.  In Section~\ref{stars} we prove the following theorem.

\begin{thm} \label{thm: star}
Suppose $G$ is a subdivision of $K_{1,m}$. If $m=1$, $T(G)$ is equitably $k$-choosable whenever $k\geq 3$. Otherwise $T(G)$ is equitably $k$-choosable whenever $k\geq m+1$. 
\end{thm}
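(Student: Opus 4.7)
The plan is to handle $m=1$ and $m\ge 2$ separately. When $m=1$, the spider $G$ is itself a path, so $T(G) = P_{2l_1+1}^2$ is a power of a path; equitable $k$-choosability for $k \geq 3$ then follows from the corresponding result for powers of paths in~\cite{KM18}, together with $\chi_\ell(P_n^2)\le 3$ for $n\ge 3$.

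For $m \geq 2$, write the spider as a center $c$ joined by $m$ internally disjoint paths (legs) of lengths $l_1\le\cdots\le l_m$, and label leg $i$ by $c = v_{i,0}, v_{i,1}, \ldots, v_{i,l_i}$ with edges $e_{i,j}=v_{i,j-1}v_{i,j}$. The key structural observation is that $K = \{c\} \cup \{e_{i,1} : i \in [m]\}$ induces a clique of size $m+1$ in $T(G)$, which forces $\chi(T(G)) \geq m+1$ and justifies the hypothesis on $k$. The case $l_1=\cdots=l_m=1$ (so $G=K_{1,m}$) gives one base case, since the LETCC is already known for stars by~\cite{KM18}; the remaining case is when some $l_i\ge 2$.

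For $m \geq 2$ the plan is induction on $\sum_{i=1}^m l_i$. Given a $k$-assignment $L$ with $k \geq m+1$, choose a leg $i^*$ with $l_{i^*} \geq 2$ and form the smaller subdivided star $G'$ by shortening leg $i^*$ by one, i.e., removing the pair $\{e_{i^*,l_{i^*}}, v_{i^*,l_{i^*}}\}$. By induction, $T(G')$ admits an equitable $L$-coloring $f'$. To extend $f'$ to $T(G)$, color the two new vertices $e_{i^*, l_{i^*}}$ and $v_{i^*, l_{i^*}}$ in this order. The first has at most $3$ previously-colored neighbors in $T(G)$ (namely $v_{i^*,l_{i^*}-1}$, $v_{i^*,l_{i^*}-2}$ when it exists, and $e_{i^*,l_{i^*}-1}$), while the second has only $e_{i^*,l_{i^*}}$ and $v_{i^*,l_{i^*}-1}$ as colored neighbors, so each has enough slack in its $k$-list to be properly colored. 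An alternative viewpoint for the extension is that once the clique $K$ is colored using $m+1$ distinct colors greedily, each leg becomes a pre-colored power of a path, whose equitable list-colorability is the mechanism used in~\cite{KM18}.

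The main obstacle is equitability: adding two vertices changes the bound from $\lceil |V(T(G'))|/k\rceil$ to $\lceil (|V(T(G'))|+2)/k\rceil$, which increases by $0$, $1$, or at most $2$. When it does not increase, $f'$ may already saturate some colors, and the two extension colors must avoid those saturated classes; this can fail if the only available colors in the two lists are precisely the saturated ones. I expect this to be addressed either by strengthening the inductive hypothesis (tracking which colors are near capacity, or forbidding one extra color at each vertex so $f'$ uses each color strictly fewer than $\lceil |V(T(G'))|/k\rceil$ times), or by peeling off several elements at a time and matching each ``chunk'' of $k$ uncolored vertices to $k$ distinct colors via a Hall-type argument. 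The most delicate subcase is when several of the $l_i$ are small, so short remainder segments from multiple legs sit next to the already-colored clique $K$ and compete for the same scarce colors.
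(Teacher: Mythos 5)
Your setup (the reduction $T(B(l_1,\ldots,l_m)) \cong [B(2l_1,\ldots,2l_m)]^2$, the clique of size $m+1$ at the hub, induction on $\sum l_i$) points in the right direction, but the proof has a genuine gap at exactly the point you flag and then leave open: equitability of the extension step. Peeling off the last edge--vertex pair of one leg and extending an equitable $L$-coloring of $T(G')$ can really fail, not just be awkward to bookkeep. If $|V(T(G))| = kq+r$ with $r \geq 3$, then $\lceil |V(T(G'))|/k\rceil = \lceil |V(T(G))|/k\rceil = q+1$, and when $q$ is small the inductive coloring may already have nearly $k$ color classes of size $q+1$; the new leaf vertex has only $k-2$ colors available for properness, and all of them can lie in full classes. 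So ``enough slack to be properly colored'' does not give an equitable coloring, and no argument is supplied to repair this. A second, smaller gap: your base case cites the known list equitable total coloring result for stars, which only gives $k \geq m+2$; the theorem claims $k \geq m+1$, and it is precisely $k=m+1$ (and short legs generally) where the base cases are delicate.

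The paper resolves the equitability issue structurally rather than by strengthening the inductive hypothesis: Lemma~\ref{lem: KPW} removes a set $S=\{x_1,\ldots,x_t\}$ with $r \leq t \leq k$ (so $G-S$ has at most $kq$ vertices and its equitable coloring uses each color at most $q$ times) and with $|N_G(x_i)-S|\leq k-i$, so $S$ can be finished with $t$ \emph{distinct} colors, keeping every class at size $\leq q+1$. The sets $S$ are chosen so that either $G-S$ has maximum degree at most $4$ and Theorem~\ref{thm: KKresult} applies ($k\geq m+2$), or $G-S$ is again a squared subdivided star and induction on $\sum l_i$ applies ($k=m+1$, Lemma~\ref{lem: m+1}); the hub is always placed first in the ordering and enough of its neighbors are put into $S$ to meet $|N_G(x_1)-S|\leq k-1$, which your two-at-a-time peel never arranges. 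The delicate short-leg situations you mention are handled by dedicated base cases (Lemma~\ref{lem: star} for $l_1\leq l_2\leq 2$ and Lemma~\ref{lem: problemcase} for $[B(1,3,3)]^2$), with Proposition~\ref{cor: pathsquare} covering $m=1,2$. So your instinct that one should peel off a whole ``chunk'' matched to distinct colors is the right one, but that mechanism (with the residue condition $t\geq r$, the degree-ordering condition, and the special small cases at $k=m+1$) is the actual content of the proof and is missing from your write-up.
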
  

Suppose $G$ is a subdivision of $K_{1,m}$.  It is worth noting that Theorem~\ref{thm: star} is the best result possible since $\chi(T(G)) \geq \max \{3, m+1\}$.  Also, the result of Theorem~\ref{thm: star} is saying something stronger than: the LETCC holds for subdivisions of stars.  This is because the LETCC only says that $T(G)$ should be equitably $k$-choosable for each $k \geq m+2$.

Finally, in Section~\ref{theta} we prove the following.

\begin{thm} \label{thm: theta}
Suppose $G = \Theta(l_1, \ldots, l_m)$, then $T(G)$ is equitably $k$-choosable whenever $k \geq m+2$. 
\end{thm}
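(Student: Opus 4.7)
The plan is to peel off the clique in $T(G)$ corresponding to the vertex $v$ and apply Theorem~\ref{thm: star} to what remains. Write $G = \Theta(l_1, \ldots, l_m)$ with $u, v$ the two degree-$m$ vertices and $P_i$ the $i$-th $uv$-path (of length $l_i$, with edges $e_{i,1}, \ldots, e_{i,l_i}$ and interior vertices $p_{i,j}$ for $1 \leq j \leq l_i - 1$). When $m = 1$, $G$ is a path and $k \geq m + 2 = 3$ gives the conclusion from Theorem~\ref{thm: star}, so assume $m \geq 2$. Set $H = G - v$, so $H$ is a subdivision of $K_{1,m}$ centered at $u$; one checks that $T(G) - K_v = T(H)$, where $K_v = \{v, e_{1, l_1}, \ldots, e_{m, l_m}\}$ is a clique of size $m + 1$ in $T(G)$.

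Fix a $k$-assignment $L$ for $T(G)$ with $k \geq m + 2$. The strategy has two steps. Step \emph{(i)}: produce an equitable $L$-coloring $f$ of $T(H)$ using Theorem~\ref{thm: star} (applicable since $k \geq m + 1$). Step \emph{(ii)}: extend $f$ by choosing distinct colors $c_0, c_1, \ldots, c_m$ for the elements $v, e_{1, l_1}, \ldots, e_{m, l_m}$ of $K_v$. Here $c_0$ must avoid the $m$ colors $f(p_{i, l_i - 1})$, and each $c_i$ must avoid the two colors $\{f(p_{i, l_i - 1}), f(e_{i, l_i - 1})\}$. Since every list in $K_v$ has size $k \geq m + 2$, a system-of-distinct-representatives argument on the bipartite compatibility graph between $K_v$ and the colors supplies the required distinct $c_j$'s.

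The main obstacle is equitability. After step \emph{(ii)}, each of the $m+1$ colors chosen for $K_v$ has its class size grow by $1$; a short arithmetic check shows $\lceil |V(T(H))|/k \rceil + 1 \leq \lceil |V(T(G))|/k \rceil$ \emph{except} in the boundary case $k \mid |V(T(G))| = 2 - m + 2\sum l_i$, where the two sides are equal and a naive extension overruns the quota by $1$. In this boundary case, exactly $m + 1$ colors are ``deficient'' (class size $\lfloor |V(T(H))|/k \rfloor$) in any equitable coloring of $T(H)$, and we are forced to use precisely those deficient colors on $K_v$. To overcome this, I plan to strengthen step \emph{(i)} by re-examining the proof of Theorem~\ref{thm: star} (or by applying local Kempe-chain swaps along some $P_i$) so as to produce an equitable coloring of $T(H)$ whose deficient-color set is compatible with the Hall-type constraints of $K_v$. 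The extra slack $k \geq m + 2$ (beyond the $k \geq m + 1$ used by the star theorem) is precisely what funds this boundary-case resolution; combining the refined step \emph{(i)} with the matching step \emph{(ii)} completes the proof.
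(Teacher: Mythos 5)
Your reduction to Theorem~\ref{thm: star} has two genuine gaps, and the second one is exactly the hard part of the theorem. First, step \emph{(ii)} is not automatic at $k=m+2$. The clique $K_v$ has $m+1$ vertices that must receive pairwise distinct colors, and after deleting the colors of their already-colored neighbors the available sets have sizes only $\geq k-m=2$ (for $v$, and also for $e_{1,1}$ when $l_1=1$) and $\geq k-2=m$ (for the other $e_{i,l_i}$). Hall's condition can then fail: take $L(x)=[m+2]$ for every $x$, and an equitable $L$-coloring $f$ of $T(H)$ in which, for every $i$, $\{f(p_{i,l_i-1}),f(e_{i,l_i-1})\}=\{c,c'\}$ for one fixed pair of colors (this is proper, since for $i\neq j$ none of $p_{i,l_i-1},e_{i,l_i-1}$ is adjacent to $p_{j,l_j-1}$ or $e_{j,l_j-1}$ in $T(H)$, and for long paths it is also equitable). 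Then every available set is contained in $[m+2]\setminus\{c\}$ or equals $[m+2]\setminus\{c,c'\}$, so the union of the $m+1$ available sets can have size $m$, and no proper rainbow extension to $K_v$ exists at all -- before equitability even enters. So you cannot use Theorem~\ref{thm: star} as a black box; you need control over how $f$ behaves near $v$, which is precisely what the paper buys by putting extra vertices such as $v_{m,2},v_{m,3}$ (and not just the clique at an endpoint) into the removed set so that the ordering hypothesis $|N_G(x_i)-S|\leq k-i$ of Lemma~\ref{lem: KPW} can be met; note your $S=K_v$ can never satisfy that hypothesis, since all of its vertices have at least two neighbors outside $S$.

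Second, the equitability obstacle you identify is real but is not resolved: ``re-examining the proof of Theorem~\ref{thm: star}'' or ``local Kempe-chain swaps'' is a plan, not an argument, and the claim that ``exactly $m+1$ colors are deficient in any equitable coloring of $T(H)$'' implicitly assumes a fixed palette of $k$ colors, which is false in the list setting (the lists may involve many colors, and the relevant constraint is only that each color placed on $K_v$ is used at most $\lceil |V(T(G))|/k\rceil-1$ times by $f$; arranging this simultaneously with the Hall-type constraints is the whole difficulty). Also, for $k>m+2$ the problematic residues are not only $k\mid |V(T(G))|$ but all $r$ with $r=0$ or $m+2\leq r\leq k-1$, where $r=|V(T(G))|\bmod k$. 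The paper avoids all of this by working directly with $T(G)\cong[\Theta(2l_1,\ldots,2l_m)]^2$: the cases $k\geq m+3$ are handled by Lemma~\ref{lem: KPW} with removal sets of size exactly $k$ (Lemmas~\ref{lem: theta2m+2} and~\ref{lem: thetam+3}), while the tight case $k=m+2$ requires bespoke colorings for $[\Theta(2,4,\ldots,4)]^2$ and $[\Theta(4,\ldots,4)]^2$ (Lemmas~\ref{lem: 3m 3}--\ref{lem: 3m+2}) and a reduction of the remaining graphs to a star-square plus one extra edge (Lemmas~\ref{lem: extra edge} and~\ref{lem: l_m 6}). Your proposal offers no substitute for that $k=m+2$ work, so as it stands the proof is incomplete.
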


So, the LETCC holds for generalized theta graphs.  Notice that in Theorem~\ref{thm: theta}, $T(G)$ is a path square and cycle square when $m$ is 1 and 2 respectively.  Since path squares with at least 3 vertices are not equitably 2-choosable, and all cycle squares with order not divisible by 3 are not equitably 3-choosable (see~\cite{KM18} for further details), one can see that in the case of $m=1,2$, $T(G)$ may not be equitably $(m+1)$-choosable.  The question of whether $T(G)$ is equitably $(m+1)$-choosable when $m \geq 3$ is open.

\begin{ques} \label{ques: m+1}
Suppose $G = \Theta(l_1, \ldots, l_m)$.  If $m \geq 3$, does it follow that $T(G)$ is equitably $(m+1)$-choosable?
\end{ques}

\section{Subdivisions of Stars} \label{stars}
In this section we prove Theorem~\ref{thm: star}.  Suppose that $m \in \N$ and $l_1, \ldots, l_m \in \N$ satisfy $l_1 \leq \cdots \leq l_m$.  Then, we use $B(l_1, \ldots, l_m)$ to denote the equivalence class of subdivisions of $K_{1,m}$ where the edges of $K_{1,m}$ have been replaced with internally disjoint paths of lengths: $l_1, \ldots, l_m$.  \footnote{For the remainder of this paper, whenever we see $B(l_1, \ldots, l_m)$, we will always assume $m,l_1, \ldots, l_m \in \N$ with $l_1 \leq \cdots \leq l_m$.}

For the remainder of this paper when $G = B(l_1, \ldots, l_m)$, we assume that $V(G) = \{u\} \cup \{v_{i,j}: i \in [m], j \in [l_i]\}$, and the edges of $G$ are drawn so that for each $i \in [m]$ vertices are adjacent if and only if they appear consecutively in the ordering: $u, v_{i,1}, \ldots , v_{i,l_{m}}$.  

Note that when $G = B(l_1, \ldots, l_m)$, $T(G)$ is isomorphic to a copy of $[B(2l_1, \ldots, 2l_m)]^2$.  So, in order to prove Theorem~\ref{thm: star}, we begin by proving the following result which will imply Theorem~\ref{thm: star} for each $m \geq 3$.

\begin{thm} \label{thm: B-graph}
For $m \geq 3$, $(B(l_1,\ldots,l_m))^2$ is equitably $k$-choosable for each $k \geq m+1$.
\end{thm}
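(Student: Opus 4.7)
The plan is to prove the theorem by strong induction on $n := |V(G)| = 1 + \sum_i l_i$. Fix $m \geq 3$, $k \geq m+1$, and a $k$-assignment $L$ for $G^2$; the goal is to produce a proper $L$-coloring in which every color class has size at most $q := \lceil n/k \rceil$.

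For the inductive step I would assume $l_m \geq k+1$ and excise the last $k$ vertices of the longest path. Set $w_j := v_{m, l_m - k + j}$ for $j \in [k]$, and let $G' := G - \{w_1, \ldots, w_k\}$. Since $l_m - k \geq 1$, $G'$ is again a $B$-graph on $n-k$ vertices with $m$ paths (of lengths $l_1, \ldots, l_{m-1}, l_m - k$ after a possible reordering), so the inductive hypothesis yields an equitable $L$-coloring $f'$ of $(G')^2$ in which every color is used at most $q - 1$ times. It then suffices to extend $f'$ by assigning $w_1, \ldots, w_k$ pairwise distinct colors, since every color count rises by at most one and so stays within $q$. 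A direct distance check in $G$ shows that the $G^2$-neighbors of $w_1, \ldots, w_k$ that lie in $G'$ all sit near the junction between the removed block and $G'$: $w_1$ has two such neighbors (at $G$-distances $1$ and $2$ from it, with $u$ playing the role of the distance-$2$ vertex when $l_m = k+1$), $w_2$ has one, and $w_j$ for $j \geq 3$ has none. Letting $L_j := L(w_j)$ minus these forbidden colors, one has $|L_1| \geq k-2$, $|L_2| \geq k-1$, and $|L_j| = k$ for $j \geq 3$. A rainbow selection from $L_1, \ldots, L_k$ is automatically a proper coloring of $\{w_1, \ldots, w_k\}$, so it is enough to verify Hall's condition for an SDR: for $|S| \leq k-2$ any single $L_j$ with $j \in S$ already has $k-2 \geq |S|$ elements, and for $|S| \in \{k-1, k\}$ (hence $|S| \geq 3$) the set $S$ must contain some $w_j$ with $j \geq 3$, giving $|\bigcup_{j \in S} L_j| \geq k \geq |S|$. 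Thus Hall holds and the extension exists.

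The main obstacle will be the base case $l_m \leq k$, in which one cannot delete a full $k$-block from the longest path without collapsing it and the equitability bound becomes tight. I would treat this small-graph regime directly: first list-color the induced subgraph of $G^2$ on the central cluster $C := \{u\} \cup \{v_{i,1}, v_{i,2} : i \in [m]\}$---structurally a $K_{m+1}$ on $\{u\} \cup \{v_{i,1} : i \in [m]\}$ with each $v_{i,2}$ joined to both $u$ and $v_{i,1}$, which has list chromatic number $m+1 = k$---and then extend greedily along each path $v_{i,3}, v_{i,4}, \ldots, v_{i,l_i}$ while balancing color counts. At each greedy step the vertex being colored has only two previously-colored $G^2$-neighbors and a list of size $k \geq 4$, so at least $k-2$ admissible colors are available; combined with $n \leq 1 + mk$, which forces $q \leq m+1$, a careful charging argument---split into sub-cases according to how many paths realize each short length and which colors were used on $C$---should keep every color class within $q$. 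This charging step, with its attendant sub-case analysis, is where I expect most of the technical work to live.
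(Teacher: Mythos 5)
Your inductive step is sound: removing the terminal block $w_1,\ldots,w_k$ of the longest path does not change distances among the remaining vertices, the list sizes $|L_1|\geq k-2$, $|L_2|\geq k-1$, $|L_j|=k$ are computed correctly, and the Hall/SDR extension argument works (it is, in essence, the same mechanism as the paper's Lemma~\ref{lem: KPW}: delete a $k$-set, color the rest by induction, then extend with $k$ pairwise distinct colors). The problem is that this reduction only fires when $l_m\geq k+1$, so everything rests on the ``base case'' $l_m\leq k$, and there you have not given a proof --- you explicitly defer to ``a careful charging argument\dots should keep every color class within $q$.'' That regime is not a small residual case; it is where essentially all the difficulty of the theorem lives, particularly when $k=m+1$. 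For instance $B(1,3,3)$ with $k=4$ has $n=8$ and $q=2$, so no color may repeat more than twice; this is exactly the configuration the paper must isolate in a separate lemma (Lemma~\ref{lem: problemcase}), and your ``color the central cluster, then extend greedily while balancing'' sketch can genuinely get stuck there: at a greedy step the $k-2$ colors surviving the adjacency constraints can all already be at quota (numerically $q(k-2)\leq n-1$ is possible throughout this regime), so some nontrivial counting or restructuring argument is required, and none is supplied. Also, your remark that the central cluster has list chromatic number $m+1=k$ only addresses properness, not the quota constraint on the cluster itself when $q$ is small, and it silently assumes $k=m+1$.

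For comparison, the paper avoids this trap by choosing the deleted $k$-set differently: for $k\geq m+2$ it deletes a set concentrated around the hub plus the end of the longest path and colors the remainder by the Kierstead--Kostochka theorem (Theorem~\ref{thm: KKresult}, applicable since the remainder has maximum degree at most $4$); for $k=m+1$ it inducts on $\sum_i l_i$ by deleting the last vertex from several \emph{different} paths (Lemma~\ref{lem: m+1}), so the induction bottoms out at $l_1\leq l_2\leq 2$, where deleting a small hub set leaves a spanning subgraph of a path square and the known equitable $k$-choosability of $P_n^2$ for $k\geq 3$ (Proposition~\ref{cor: pathsquare}) finishes the job, with $B(1,3,3)$ handled separately. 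If you want to salvage your scheme, you need either an actual proof of the $l_m\leq k$ case (sub-case analysis included) or a different deletion strategy that shrinks the base case to something covered by known results; as written, the argument has a genuine gap.
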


Notice that for $m \geq 3$ Theorem~\ref{thm: B-graph} is saying something stronger than Theorem~\ref{thm: star} since we are allowing any of the natural numbers $l_1,\ldots,l_m$ to be odd.  We now prove a Lemma that is closely related to a Lemma appearing in~\cite{KP03}; we use this Lemma frequently to prove our results.  

\begin{lem} \label{lem: KPW}
Let $G$ be a graph and let $L$ be a $k$-assignment for $G$. Suppose that $|V(G)|=kq+r$ where $1\leq r \leq k$.  Suppose $t$ satisfies $r \leq t \leq k$. Let $S=\{x_1, \ldots , x_t\}$ be a set of $t$ distinct vertices in $G$. If $G-S$ has an equitable $L$-coloring and $$|N_G(x_i)-S|\leq k-i$$ for $1 \leq i \leq t$, then $G$ has an equitable $L$-coloring.
\end{lem}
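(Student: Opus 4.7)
The plan is to extend the given equitable $L$-coloring of $G - S$ to $x_1, \ldots, x_t$ greedily in that order, insisting that all $t$ newly assigned colors be pairwise distinct. The point of this strengthening is equitability: since $|V(G - S)| = kq + r - t \leq kq$ (using $r \leq t$), every color class of the equitable $L$-coloring of $G - S$ has size at most $\lceil |V(G - S)|/k \rceil = q$, so if the extension uses each color at most once then every color class of the resulting coloring of $G$ has size at most $q + 1 = \lceil |V(G)|/k \rceil$, which is exactly the equitable bound for $G$.

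It remains to argue that such a greedy extension succeeds, and this is where the degree hypothesis is used. When it is $x_i$'s turn I would forbid the colors of all neighbors of $x_i$ in $G - S$, which is at most $|N_G(x_i) - S| \leq k - i$ colors, together with the colors already chosen for $x_1, \ldots, x_{i-1}$, which contributes at most $i - 1$ additional colors. In total at most $(k - i) + (i - 1) = k - 1$ colors are forbidden, so since $L(x_i)$ has size $k$ at least one color remains available, and I would assign it to $x_i$.

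After all $t$ steps, I would check properness edge-by-edge: edges inside $G - S$ were handled by the initial coloring, edges between $S$ and $V(G) - S$ were handled because we forbade the colors of $G - S$ neighbors, and edges inside $S$ were handled because $x_1, \ldots, x_t$ were forced to take pairwise distinct colors. Equitability follows from the pigeonhole observation in the first paragraph. I do not foresee any serious obstacle; the only real idea is to insist that the extension use distinct colors, which is more than what a plain proper coloring would demand but still fits within the greedy count $(k - i) + (i - 1) = k - 1 < k$, and which automatically delivers the equitable upper bound with no further bookkeeping.
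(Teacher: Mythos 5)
Your proposal is correct and is essentially the paper's own argument: delete the colors of the $G-S$ neighbors of $x_i$ (leaving at least $i$ available colors) and then color $x_1,\ldots,x_t$ with pairwise distinct colors, which simultaneously handles edges inside $S$ and guarantees each color is used at most $q+1=\lceil |V(G)|/k\rceil$ times. No substantive difference from the paper's proof.
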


\begin{proof}
Suppose that $f$ is an equitable $L$-coloring of $G-S$ (notice $G-S$ could be the empty graph). Note that no color is used more than $q$ times by $f$.  In an equitable $L$-coloring of $G$ we must use no color more than $\lceil (kq+r)/k \rceil = q+1$ times. Let $L'(x_i) = L(x_i)-\{f(v) : v \in N_G(x_i)-S \}$ for each $i \in [t]$. Since $|N_G(x_i) -S| \leq k-i$, we know that $|L'(x_i)| \geq i$. So, there is a proper $L'$-coloring of $G[S]$ that uses $t$ distinct colors.  Such a coloring along with $f$ completes an equitable $L$-coloring of $G$.
\end{proof}

We will now prove five lemmas that will imply Theorem~\ref{thm: B-graph}.  The first two of the five lemmas will take care of the case where $k \geq m+2$, and the last three of the five lemmas will deal with $k= m+1$.

\begin{lem} \label{lem: m+3}
Suppose $m \geq 3$.  If $H = B(l_1, l_2,\ldots,l_m)$ and $G = H^2$, then $G$ is equitably $k$-choosable whenever $k \geq m+3$.
\end{lem}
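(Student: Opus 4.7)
Let $L$ be a $k$-assignment for $G = H^2$, write $n = |V(G)| = 1 + \sum_{i=1}^m l_i$ and $n = kq + r$ with $1 \leq r \leq k$, and induct on $n$. The plan is to apply Lemma~\ref{lem: KPW} with an $r$-vertex set $S$ drawn from the outer ends of the arms of $H$, so that $G - S$ is the square of a strictly smaller $B$-graph and admits an equitable $L$-coloring by the inductive hypothesis (or by results of Section~\ref{stars} when the arm count of the residual falls below three). The base case is $H = K_{1,m}$ (all $l_i = 1$), where $G = K_{m+1}$ is equitably $k$-choosable for every $k \geq m+1$ by a system-of-distinct-representatives argument.

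For the inductive step with $l_m \geq 2$, suppose first that $r \leq l_m$. Take $S = \{v_{m, l_m - r + 1}, \ldots, v_{m, l_m}\}$, a tail of length $r$ at the outer end of the longest arm. Then $G - S$ is the square of $B(l_1, \ldots, l_{m-1}, l_m - r)$ (with the last arm dropped if $r = l_m$), a strictly smaller graph of the same form, so the inductive hypothesis (or the path-square result when the residual has fewer than three arms) furnishes an equitable $L$-coloring. Order $S$ as $x_a = v_{m, l_m - r + a}$ for $a = 1, \ldots, r$. Since $S$ is an interval of length $r$ along a single path of $H$, a direct distance computation yields $|N_G(x_1) - S| \leq 2$ when $r < l_m$ (and $|N_G(x_1) - S| \leq m$ in the boundary case $r = l_m$, where $x_1 = v_{m,1}$ is adjacent to $u$ and to the other arm-roots), $|N_G(x_2) - S| \leq 1$, and $|N_G(x_a) - S| = 0$ for all $a \geq 3$. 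Since $k \geq m + 3$, every constraint $|N_G(x_i) - S| \leq k - i$ is satisfied, so Lemma~\ref{lem: KPW} completes this subcase.

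The remaining subcase $r > l_m$ requires $S$ to span more than one arm: remove the longest arm(s) in full and top up with a partial tail of the next arm until $|S| = r$. Within each removed tail, order the innermost vertex first (with at most $m$ outside-$S$ neighbors when a full arm is removed and the innermost vertex is some $v_{i,1}$, adjacent to $u$ and to the other arm-roots), then the next innermost (with at most one outside-$S$ neighbor), and finally the interior tail vertices (with none); a computation identical in spirit to that of the previous subcase verifies the Lemma~\ref{lem: KPW} constraints for $k \geq m+3$. The main obstacle is the bookkeeping: one must check the constraint $|N_G(x_i) - S| \leq k - i$ across every configuration of which arms are fully removed and which arm is partially removed, and handle the edge case in which the residual arm count drops below three, where the residual becomes a path square and the equitable list-coloring results for path squares from Section~\ref{stars} take over.
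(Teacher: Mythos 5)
Your overall strategy is genuinely different from the paper's. The paper's proof is not inductive: it removes a single set $S$ of exactly $k$ vertices containing $u$, every root $v_{i,1}$, and $v_{m,2}$ (and $v_{m,3}$ when it exists), padded arbitrarily up to size $k$; then $G-S$ has maximum degree at most $4$, so an equitable $L$-coloring of $G-S$ comes directly from Kierstead--Kostochka (Theorem~\ref{thm: KKresult}), and Lemma~\ref{lem: KPW} finishes. You instead induct on $|V(G)|$, remove only $r$ vertices from arm ends so that the remainder is again a $B$-graph square, and replace Theorem~\ref{thm: KKresult} by the inductive hypothesis (or Proposition~\ref{cor: pathsquare}). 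Your single-arm subcase $r\le l_m$ is correct as written. But the subcase $r>l_m$, which you defer as ``bookkeeping,'' is exactly where the work lies, and as described it does not close.

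Concretely: (i) Your stated bound of ``at most $m$ outside-$S$ neighbors'' for the root of each fully removed arm, together with the arm-by-arm ordering you describe, can violate the requirement $|N_G(x_i)-S|\le k-i$. A vertex with $m$ outside-$S$ neighbors can only occupy an index $i\le k-m$, and $k-m$ may equal $3$, while you may fully remove many more than three arms (for example $m=12$, $k=m+3=15$, $r=15$, five arms of length $3$ removed). The repair is the sharper count --- once $j$ roots lie in $S$, each removed root has at most $1+(m-j)$ neighbors outside $S$, since the other removed roots are in $S$ --- together with a global ordering that places all removed roots at the earliest indices; neither is in your write-up. (ii) When $r=k$, the last vertex must satisfy $|N_G(x_k)-S|=0$, so you must exhibit a removed vertex all of whose $G$-neighbors lie in $S$; this needs an argument (if $l_m\le 2$ then $|V(G)|\le 2m+1<2k$, which is incompatible with $r=k$ and $q\ge 1$, and if $l_m\ge 3$ the fully removed arm $m$ supplies such a vertex), not just an appeal to the previous subcase. (iii) Your base case covers only $K_{1,m}$, but your removal scheme never touches $u$, so it cannot produce $|S|=r$ when $r=|V(G)|$, i.e.\ when $k\ge |V(G)|$; that case must be handled separately (it is easy: color all vertices with distinct colors, as the paper notes). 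With these repairs your induction does go through, but they are precisely the missing content; note also that the paper's route avoids all of this case analysis at the cost of invoking Theorem~\ref{thm: KKresult}.
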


\begin{proof}
The result is obvious when $k \geq |V(G)| = 1+\sum^m_{i=1} l_i$. So, we may assume that $L$ is an arbitrary $k$-assignment for $G$ such that $m+3 \leq k < 1+\sum ^m _{i=1}l_i$.  We will show that $G$ is equitably $L$-colorable.  

Since $1+\sum ^m _{i=1}l_i > m+3$, $l_{m} > 1$. Let $S_0 = \{v_{i,1} : i \in [m]\} \cup A$ where $A = \{u, v_{m,2} \}$ if $v_{m,3} \notin V(G)$ and $A = \{u, v_{m,2}, v_{m,3} \}$ otherwise. Note $m+2 \leq |S_0| \leq m+3$.  Let $d=k-|S_0|$.  Let $S_1$ be an arbitrary subset of $V(G)-S_0$ of size $d$, and let $S=S_0 \cup S_1$. Note that $G-S$ is a graph with maximum degree at most 4. By Theorem~\ref{thm: KKresult}, there is an equitable $L$-coloring of $G-S$. 

Now, let $x_1 = u$, $x_{k-3} = v_{m-2,1}$, $x_{k-2} = v_{m-1,1}$, $x_{k-1} = v_{m,2}$, and $x_k = v_{m,1}$.  We then arbitrarily name the remaining vertices in $S$: $x_2, \ldots , x_{k-4}$ in an injective fashion. By the way $S$ is constructed, $|N_G(x_{k-i}) - S| \leq 2$ for $i=2,3$, $|N_G(x_{k-1}) - S| \leq 1$, and $|N_G(x_k) - S| = 0$.  Moreover, $|N_G(x_1) - S| \leq m-1 \leq (m+3) - 1 \leq k-1$.  Finally, for $2 \leq i \leq k-4$, $|N_G(x_i) - S| \leq 4 \leq k-i$. So, Lemma~\ref{lem: KPW} implies $G$ is equitably $L$-colorable.
\end{proof} 

\begin{lem} \label{lem: m+2}
Suppose $m \geq 3$.  If $H = B (l_1, l_2,\ldots,l_m)$ and $G=H^2$, then $G$ is equitably $(m+2)$-choosable.
\end{lem}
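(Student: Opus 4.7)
My plan mirrors the proof of Lemma~\ref{lem: m+3}: given an arbitrary $k$-assignment $L$ for $G$ with $k = m+2$, I will construct $S \subseteq V(G)$ of size exactly $k$ so that (i) $G - S$ has maximum degree at most $4$, whence Theorem~\ref{thm: KKresult} supplies an equitable $L$-coloring of $G - S$, and (ii) $S$ admits an ordering satisfying the hypotheses of Lemma~\ref{lem: KPW}, which then extends the coloring to an equitable $L$-coloring of $G$. The case $|V(G)| \leq k$ forces $l_i = 1$ for every $i$, so that $G = K_{m+1}$, which is trivially equitably $(m+2)$-choosable; assume $|V(G)| > k$ henceforth. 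The essential new difficulty compared with Lemma~\ref{lem: m+3} is that $|S|$ is pinned at $k$, ruling out the size-$(m+3)$ set used there whenever $l_m \geq 3$.

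If $l_m \leq 2$ then $|V(G)| > k$ forces $l_m = 2$, and the choice from Lemma~\ref{lem: m+3} already has size $m+2$ and works verbatim, with $v_{m,1}$ (all neighbors in $S$) and $v_{m,2}$ (also all neighbors in $S$, since $v_{m,3}$ does not exist) holding positions $x_{m+2}$ and $x_{m+1}$. If instead $l_m \geq 3$ but some $l_{i_0} \leq 2$ with $i_0 < m$, I exploit the short branch to produce a vertex whose full neighborhood sits inside $S$: when $l_{i_0} = 1$, keep $S = \{u\} \cup \{v_{i,1} : i \in [m]\} \cup \{v_{m,2}\}$ and take $x_{m+2} = v_{i_0,1}$, $x_{m+1} = v_{m,1}$ (whose only outside neighbor is $v_{m,3}$); when $l_{i_0} = 2$, swap $v_{i_0,2}$ for $v_{m,2}$ in $S$ so that both $v_{i_0,1}$ and $v_{i_0,2}$ have empty outside, freeing $v_{m,1}$ (outside of size at most $2$) for position $x_m$. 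The remaining $v_{j,1}$'s slot into middle positions with outside count at most $2$, and $u$ goes into $x_1$.

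The hardest case is $l_i \geq 3$ for every $i$, where no short branch helps. Here I pull the centre of the longest path into $S$: when $l_m = 3$, take $S = \{u, v_{m,1}, v_{m,2}, v_{m,3}\} \cup \{v_{j,1} : j \in [m-1] \setminus \{j_0\}\}$ for a single excluded index $j_0$; when $l_m \geq 4$, take $S = \{u, v_{m,1}, v_{m,2}, v_{m,3}, v_{m,4}\} \cup \{v_{j,1} : j \in A\}$ for some $A \subseteq [m-1]$ of size $m-3$. In either version, the entire neighborhood of $v_{m,2}$ lies in $S$ and it occupies $x_{m+2}$; meanwhile $v_{m,1}, v_{m,3}$, and (when present) $v_{m,4}$ have outside counts at most $2$, $1$, $2$ and fill positions $x_m, x_{m+1}, x_{m-1}$. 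The main obstacle is the ordering bound at $x_1 = u$ when $l_m \geq 4$: excluding two $v_{j,1}$'s inflates $|N_G(u) - S|$ to exactly $m+1$ (the two excluded $v_{j,1}$'s together with the $m-1$ vertices $v_{j,2}$ for $j \in [m-1]$), matching the bound $k - 1 = m+1$ at $x_1$ precisely; and each of the remaining $v_{j,1}$'s in $S$ has outside at most $4$, which fits the $k - i$ constraint at $i \leq m - 2$ because $m \geq 3$. A direct check confirms $G - S$ has maximum degree at most $4$ in every case.
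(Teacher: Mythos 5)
Your proposal is correct and follows essentially the same route as the paper's proof: the same kind of removal set $S$ of size $m+2$ (the center $u$, most of the vertices $v_{j,1}$, and an initial segment $v_{m,2},v_{m,3},\ldots$ of the longest branch, split according to $l_m=2$, $l_m=3$, $l_m\geq 4$), with $G-S$ colored via Theorem~\ref{thm: KKresult} and the coloring extended via Lemma~\ref{lem: KPW}; your extra case exploiting a short branch ($l_{i_0}\leq 2$ with $i_0<m$) is redundant, since the $l_m=3$ and $l_m\geq 4$ constructions work regardless of the other $l_i$, but it is harmless. The only slip is the opening claim that $|V(G)|\leq m+2$ forces every $l_i=1$ (one $l_i$ may equal $2$, giving $|V(G)|=m+2$ and $G\neq K_{m+1}$); that boundary instance is still immediate, e.g.\ because when $k\geq |V(G)|$ a proper $L$-coloring using pairwise distinct colors can be chosen greedily, or because your $l_m=2$ set is then all of $V(G)$ and Lemma~\ref{lem: KPW} explicitly allows $G-S$ to be empty.
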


\begin{proof} 
Note that the result is obvious when $l_m=1$.  So, we assume that $l_m > 1$.  Let $L$ be an arbitrary $(m+2)$-assignment for $G$. We will show that $G$ is equitably $L$-colorable in the following cases: $l_m=2$, $l_m=3$, and $l_m\geq 4$. 

In the case $l_m=2$, let $S= \{v_{i,1} : i \in [m]\}\cup \{u, v_{m,2}\}$. We name the vertices of $S$ as $x_1, x_2, \ldots , x_{m+2} $ where $x_1=u$, $x_{m+2}=v_{m,2}$, and for each $j \in [m]$, $x_{j+1}=v_{j, 1}$. By Theorem~\ref{thm: KKresult}, $G-S$ is equitably $L$-colorable. It is easy to see that $|N_G(x_1)-S| = m-1 \leq (m+2)-1$, $|N_G(x_{m+2})-S| =0$, $|N_G(x_{m+1})-S| =0$, and for each $2\leq j \leq m$, $|N_G(x_{j})-S| = 1 \leq m+2-j$ for each $j \in [m]$. Thus, $G$ is equitably $L$-colorable by Lemma~\ref{lem: KPW}. 

In the case $l_m=3$, let $S=\{v_{i,1} : 2 \leq i \leq m\} \cup \{u, v_{m,2}, v_{m,3}\}$. We name the vertices of $S$ as $x_1, x_2, \ldots , x_{m+2} $ where $x_1=u$, $x_{m+2}=v_{m,3}$, $x_{m+1}=v_{m,2}$, and for each $2 \leq i \leq m$, $x_{i}=v_{i, 1}$. By Theorem~\ref{thm: KKresult}, $G-S$ is equitably $L$-colorable. It is easy to see that $|N_G(x_1)-S| = m \leq (m+2)-1$, $|N_G(x_{m+2})-S| =0$, $|N_G(x_{m+1})-S| =0$, $|N_G(x_{m})-S| =1$, and for each $2\leq j \leq m-1$, $|N_G(x_{j})-S| \leq 3 \leq m+2-j$. Thus, $G$ is equitably $L$-colorable by Lemma~\ref{lem: KPW}. 

In the case $l_m \geq 4$, let $S =  \{v_{i,1} : 3 \leq i \leq m\} \cup \{u, v_{m,2}, v_{m,3}, v_{m,4}\}$. We name the vertices of $S$ as $x_1, x_2, \ldots , x_{m+2} $ where $x_1=u$, $x_{m+2}=v_{m,2}$, $x_{m+1}=v_{m,3}$, $x_{m}=v_{m,4}$, and $x_{m-1}=v_{m,1}$. Finally, if $m \geq 4$, then for each $2 \leq j \leq m-2$ we let $x_j=v_{j+1,1}$. Notice $G-S$ has maximum degree at most 4.  So, by Theorem~\ref{thm: KKresult}, $G-S$ is equitably $L$-colorable. It is easy to see that $|N_G(x_1)-S|=m+1 \leq (m+2)-1$, $|N_G(x_{m+2})-S|=0$, $|N_G(x_{m+1})-S|\leq 1$, $|N_G(x_{m})-S|\leq 2$, and $|N_G(x_{m-1})-S|=2$. Finally, if $m \geq 4$ then for each $2\leq j \leq m-2$, $|N_G(x_j)-S|\leq 4 \leq m+2-j$. Thus, $G$ is equitably $L$-colorable by Lemma~\ref{lem: KPW}. 
\end{proof}

We now turn our attention to the case of $k=m+1$.  Notice that in the case when $m=3$,  when we try to use Lemma~\ref{lem: KPW}, we will no longer be able to use Theorem~\ref{thm: KKresult} to show $G-S$ is equitably $L$-colorable.  So, we need a result from~\cite{KM18}. 

\begin{pro} [\cite{KM18}] \label{cor: pathsquare}
 For p, n $\in \mathbb{N}$, $P_n^p$ is equitably $k$-choosable whenever $k \geq p+1$.
\end{pro}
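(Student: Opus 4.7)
The plan is to induct on $n$ and apply Lemma~\ref{lem: KPW} at each step, peeling off the last $k$ vertices of the path. Label $V(P_n^p) = \{v_1, \ldots, v_n\}$ along the path so that $v_i v_j \in E(P_n^p)$ iff $0 < |i-j| \leq p$, fix $k \geq p+1$, and let $L$ be an arbitrary $k$-assignment for $G = P_n^p$.

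For the base case, suppose $n \leq k$. Writing $n = 0 \cdot k + n$ puts us in the regime $r = n \leq k$. Take $S = V(G)$ with any ordering $x_1, \ldots, x_n$ of its vertices. Then $G - S = \emptyset$ is trivially equitably $L$-colored, and $|N_G(x_i) - S| = 0 \leq k - i$ holds for every $i \in [n]$, so Lemma~\ref{lem: KPW} returns a proper $L$-coloring of $G$ using $n$ distinct colors. This is precisely what equitability demands here, since $\lceil n/k\rceil = 1$.

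For the inductive step, suppose $n > k$ and write $n = kq + r$ with $1 \leq r \leq k$. Let $S = \{v_{n-k+1}, \ldots, v_n\}$ and order it by $x_i = v_{n-k+i}$ for $i \in [k]$. Since $G - S \cong P_{n-k}^p$ has strictly fewer vertices, induction supplies an equitable $L$-coloring of $G - S$ whose color classes have size at most $\lceil (n-k)/k \rceil = q$. The one inequality to verify is the neighborhood condition: the neighbors of $x_i$ outside $S$ are exactly the $v_j$ with $n-k+i-p \leq j \leq n-k$, so $|N_G(x_i) - S| \leq \max\{0, p-i+1\}$, and the hypothesis $k \geq p+1$ upgrades this to $|N_G(x_i) - S| \leq k - i$. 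Lemma~\ref{lem: KPW} then extends the coloring by placing $k$ distinct colors on $S$, each of which raises its color-class size by at most $1$, so every class ends up with size at most $q + 1 = \lceil n/k \rceil$.

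I do not anticipate a serious obstacle. The hypothesis $k \geq p+1$ is invoked at exactly one place—the inequality $p - i + 1 \leq k - i$ for $i \leq p$—and the linear order on $P_n^p$ makes the last-$k$-vertex window the natural target for Lemma~\ref{lem: KPW}. The only edge case worth noting is $n = k$, which is absorbed into the base case with $G - S$ vacuously colored.
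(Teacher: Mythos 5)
Your proof is correct, but note that there is nothing in this paper to compare it against: Proposition~\ref{cor: pathsquare} is quoted from~\cite{KM18} without proof, so you have supplied a self-contained derivation of a cited result. Your induction is sound: the base case $n \leq k$ is handled by Lemma~\ref{lem: KPW} with $S = V(G)$ (the lemma explicitly allows $G-S$ to be empty, and $\lceil n/k \rceil = 1$ is met since the lemma places distinct colors on $S$); in the inductive step, peeling off the last $k$ vertices leaves $P_{n-k}^p$, whose equitable $L$-coloring has classes of size at most $\lceil (n-k)/k \rceil = q$, and the neighborhood count $|N_G(x_i) - S| \leq \max\{0, p-i+1\} \leq k-i$ is exactly where $k \geq p+1$ is used, so Lemma~\ref{lem: KPW} extends to an equitable $L$-coloring of $P_n^p$ with classes of size at most $q+1 = \lceil n/k \rceil$. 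This ``delete a suitable $k$-set and extend via the Kostochka--Pelsmajer--West lemma'' strategy is precisely the technique the present paper uses in all of its own lemmas (and is in the spirit of the argument in~\cite{KM18}), so your route is the natural one; the only thing it does not give you that~\cite{KM18} also establishes is any statement beyond path powers (e.g.\ the companion result Proposition~\ref{pro: cyclesquare} for cycle powers, which needs a different argument since deleting the last $k$ vertices of a cycle power does not leave a smaller cycle power).
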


Notice that Proposition~\ref{cor: pathsquare} immediately implies that if $G$ is a spanning subgraph of a path square, then $G$ is equitably $k$-choosable whenever $k \geq 3$.

\begin{lem} \label{lem: star}
Suppose $m \geq 3$.  If $H =  B(l_1, l_2,\ldots,l_m)$, $G=H^2$, and $l_1 \leq l_2 \leq 2$, then $G$ is equitably $(m+1)$-choosable.
\end{lem}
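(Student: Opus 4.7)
The plan is to apply Lemma~\ref{lem: KPW} with $t = k = m + 1$ and a carefully chosen set $S$ of $m + 1$ vertices near $u$. The natural first candidate is the ``core'' $S_0 := \{u\} \cup \{v_{i,1} : i \in [m]\}$, which induces the $K_{m+1}$ subgraph of $G$. Deleting $S_0$ leaves, for each leg with $l_i \geq 2$, the path square $P_{l_i - 1}^2$ on $\{v_{i,2}, \ldots, v_{i,l_i}\}$, and these components are pairwise disconnected in $G - S_0$. Concatenating the tails into one long path exhibits $G - S_0$ as a spanning subgraph of a single path square, so by the remark following Proposition~\ref{cor: pathsquare} (and since $m + 1 \geq 4 \geq 3$), $G - S_0$ has an equitable $L$-coloring.

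It remains to order the vertices of $S_0$ so that $|N_G(x_i) - S_0| \leq (m+1) - i$. A direct count gives $|N_G(u) - S_0| = |\{i : l_i \geq 2\}| \leq m$ and $|N_G(v_{i,1}) - S_0| = [l_i \geq 2] + [l_i \geq 3] \in \{0, 1, 2\}$. In the subcases $l_1 = l_2 = 1$ and $l_1 = 1, l_2 = 2$, the assumption $l_1 = 1$ forces $|N_G(v_{1,1}) - S_0| = 0$, so $v_{1,1}$ can occupy the final slot $x_{m+1}$; listing the vertices of $S_0$ in non-increasing order of $|N_G(\cdot) - S_0|$, and noting that the number of $v_{i,1}$'s at each positive value is controlled by the hypothesis $l_1 \leq l_2 \leq 2 \leq l_3 \leq \cdots \leq l_m$, a brief position-by-position check confirms the required slot inequalities.

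The main obstacle is the remaining subcase $l_1 = l_2 = 2$: every $v_{i,1}$ in $S_0$ still retains its tail neighbor $v_{i,2}$ outside $S_0$, so every value is at least $1$ and the final slot cannot be filled from $S_0$. I would remedy this by swapping $v_{m,1}$ out of the core and $v_{1,2}$ in, setting $S := \{u, v_{1,1}, v_{2,1}, \ldots, v_{m-1,1}, v_{1,2}\}$. Since both neighbors of $v_{1,2}$, namely $u$ and $v_{1,1}$, now sit in $S$, one has $|N_G(v_{1,2}) - S| = 0$, supplying the missing value-$0$ vertex. A direct computation gives $|N_G(u) - S| = m$, $|N_G(v_{1,1}) - S| = 1$, $|N_G(v_{2,1}) - S| = 2$, and $|N_G(v_{i,1}) - S| = 2 + [l_i \geq 3]$ for $3 \leq i \leq m - 1$; since at most $m - 3$ indices $i \in [3, m-1]$ satisfy $l_i \geq 3$, these values fit the decreasing slot pattern $m, m-1, \ldots, 0$ exactly. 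Meanwhile $G - S$ differs from $G - S_0$ only by absorbing $v_{m,1}$ into the $m$-th leg's tail (extending that component to $P_{l_m}^2$), so $G - S$ remains a spanning subgraph of a path square and is equitably $L$-colorable. I expect uniformly verifying the slot inequalities over all allowable values of $l_3, \ldots, l_m$ in this last subcase to be the most delicate piece of bookkeeping.
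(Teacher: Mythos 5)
Your proposal is correct and follows essentially the same route as the paper: apply Lemma~\ref{lem: KPW} with $t=k=m+1$ to a set consisting of $u$ and the vertices $v_{i,1}$, coloring the leftover disjoint union of path squares via Proposition~\ref{cor: pathsquare}, and in the troublesome subcase swapping $v_{m,1}$ out for a second-level vertex to supply the needed degree-$0$ slot. The only cosmetic difference is that the paper splits on $l_2\in\{1,2\}$ and always swaps in $v_{2,2}$ (which exists whenever $l_2=2$), whereas you keep the unmodified core when $l_1=1$ and swap in $v_{1,2}$ only when $l_1=l_2=2$; your slot counts check out in all three subcases.
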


\begin{proof}
We may assume that $|V(G)| > m+1$.  Suppose $L$ is an arbitrary $(m+1)$-assignment for $G$. In the case that $l_2 =1$, let $S=\{ u, v_{1,1}, v_{2,1}, \ldots, v_{m,1} \}$. Clearly $G-S$ has an equitable $L$-coloring by Proposition~\ref{cor: pathsquare}. Let $x_1 =u$, and let $x_{i+1}= v_{m+1-i,1}$ for each $i\in[m]$. Note that $|N_G(x_1)-S| \leq m-2 \leq (m+1)-1$, $|N_G(x_i)-S| \leq 2 \leq m+1-i$ for all $2 \leq i \leq m-1$, and $|V_G(x_i)-S| = 0 \leq m+1-i$ when $i=m,m+1$. By Lemma~\ref{lem: KPW}, we know that $G$ has an equitable $L$-coloring.

In the case that $l_2 =2$, let $S=\{u, v_{1,1}, v_{2,1}, \ldots, v_{m-1,1}, v_{2,2}\}$. Clearly $G-S$ has an equitable $L$-coloring by Proposition~\ref{cor: pathsquare}. Let $x_1=u$, $x_{m-1} = v_{1,1}$, $x_m = v_{2,1}$, and $x_{m+1}=v_{2,2}$.  If $m \geq 4$, let $x_{i+1}=v_{m-i,1}$ for each $i \in [m-3]$. Note: $|N_G(x_1)-S| \leq m+1-1$, $|N_G(x_{m-1})-S| \leq 2$, $|N_G(x_m)-S| = 1 $, and $|N_G(x_{m+1})-S| = 0$. Furthermore, if $m \geq 4 $, for each $2 \leq i \leq m-2$, $|N_G(x_i)-S| \leq 3 \leq m+1-i$.  By Lemma~\ref{lem: KPW}, we know that $G$ has an equitable $L$-coloring.
\end{proof}

We now would like to prove for $m \geq 3$ that if $H = B (l_1, l_2,\ldots,l_m)$ and $G=H^2$, then $G$ is equitably $(m+1)$-choosable by induction on $\sum_{i=1}^m l_i$.  Lemma~\ref{lem: star} takes care of the base case.  The next Lemma takes care of a small issue in the inductive step when $m=3$.

\begin{lem} \label{lem: problemcase}
If $H = B(1,3,3)$ and $G=H^2$, then $G$ is equitably $4$-choosable.
\end{lem}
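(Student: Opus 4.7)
My plan is to apply Lemma~\ref{lem: KPW} with $t = k = 4$. Writing $|V(G)| = 8 = 4 \cdot 1 + 4$ gives $q = 1$ and $r = 4$ in the notation of that lemma, so a set $S \subseteq V(G)$ of size $4$ is a valid choice. Concretely, I would take $S = \{u, v_{2,1}, v_{2,2}, v_{2,3}\}$ --- the central vertex together with the entirety of one of the two length-$3$ branches.

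First I would verify that $G - S$ admits an equitable $L$-coloring for any $4$-assignment $L$. The four remaining vertices are $v_{1,1}, v_{3,1}, v_{3,2}, v_{3,3}$, and using the fact that adjacency in $G = H^2$ corresponds to distance at most $2$ in $H$, the edges of $G - S$ are exactly $v_{1,1} v_{3,1}$, $v_{3,1} v_{3,2}$, $v_{3,1} v_{3,3}$, and $v_{3,2} v_{3,3}$ (the only edge missing from $P_4^2$ under the ordering $v_{1,1}, v_{3,1}, v_{3,2}, v_{3,3}$ is $v_{1,1} v_{3,2}$, which corresponds to distance $3$ in $H$). Thus $G - S$ is a spanning subgraph of a path square, so by the remark following Proposition~\ref{cor: pathsquare} it is equitably $4$-choosable, yielding an equitable $L$-coloring of $G - S$ (which is necessarily rainbow, since $|V(G-S)| = k$).

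Next I would order the vertices of $S$ as $x_1 = u$, $x_2 = v_{2,1}$, $x_3 = v_{2,2}$, $x_4 = v_{2,3}$ and verify the inequalities required by Lemma~\ref{lem: KPW}. A direct computation yields $|N_G(x_1) - S| = |\{v_{1,1}, v_{3,1}, v_{3,2}\}| = 3$, $|N_G(x_2) - S| = |\{v_{1,1}, v_{3,1}\}| = 2$, $|N_G(x_3) - S| = 0$, and $|N_G(x_4) - S| = 0$. All four satisfy $|N_G(x_i) - S| \leq k - i$, with the first two tight. Lemma~\ref{lem: KPW} then produces the desired equitable $L$-coloring of $G$.

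The main obstacle is really combinatorial --- choosing $S$ correctly --- rather than any deep coloring argument. The central vertex $u$ has five neighbors in $G$ and is forced into the first slot of the ordering, so $|N_G(u) - S| \leq 3$ requires $S$ to contain at least two neighbors of $u$. Absorbing an entire length-$3$ branch into $S$ handles this while simultaneously leaving the deep vertices $v_{2,2}$ and $v_{2,3}$ with no neighbors outside $S$, which trivially fulfills the more restrictive inequalities at $x_3$ and $x_4$. Any $S$ that instead tried to keep both long branches largely intact would leave $u$ with too many outside neighbors to satisfy the Lemma~\ref{lem: KPW} bound.
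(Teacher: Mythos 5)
Your proof is correct and follows essentially the same route as the paper: apply Lemma~\ref{lem: KPW} with a $4$-element set $S$ whose removal leaves (a spanning subgraph of) a path square, which is handled by Proposition~\ref{cor: pathsquare}. The paper merely picks a different valid $S$, namely $\{v_{2,3}, v_{3,1}, v_{3,2}, v_{3,3}\}$, leaving $u$ outside $S$ so that $G-S$ is exactly $P_4^2$; your choice $\{u, v_{2,1}, v_{2,2}, v_{2,3}\}$ with the ordering $u, v_{2,1}, v_{2,2}, v_{2,3}$ satisfies the same hypotheses and works equally well.
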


\begin{proof}
Suppose that $L$ is an arbitrary 4-assignment for $G$. Let $S = \{v_{2,3}, v_{3,3}, v_{3,2}, v_{3,1} \}$. Note that $G-S$ is the square of a path. So, by Proposition~\ref{cor: pathsquare} we know that $G-S$ has an equitable $L$-coloring. We then let $x_1 = v_{3,1}$, $x_2 =v_{2,3}$, $x_3 = v_{3,2}$, and $x_4=v_{3,3}$. Note that $|N_G(x_i)-S| = 4-i$ for all $i \in [4]$. So, by Lemma~\ref{lem: KPW}, we know that $G$ has an equitable $L$-coloring.
\end{proof}

We are finally ready to complete our proof of Theorem~\ref{thm: B-graph}.

\begin{lem} \label {lem: m+1}
Suppose that $m \geq 3$.  If $H = B (l_1, l_2,\ldots,l_m)$ and $G=H^2$, then $G$ is equitably $(m+1)$-choosable.
\end{lem}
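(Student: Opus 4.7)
The plan is to strong-induct on $\sum_{i=1}^m l_i$, bootstrapping from Lemma~\ref{lem: star} (base case $l_2\leq 2$) and Lemma~\ref{lem: problemcase} (the small residual instance $B(1,3,3)$). For the inductive step we may therefore assume $l_2\geq 3$ and $H\neq B(1,3,3)$. Given an $(m+1)$-assignment $L$ for $G$, I would produce a set $S\subseteq V(G)$ with $|S|=m+1$ and a compatible ordering of its elements, at which point Lemma~\ref{lem: KPW} finishes the argument; the choice of $S$ splits on the size of $l_m$.

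If $l_m\geq m+2$, take $S=\{v_{m,l_m-m},v_{m,l_m-m+1},\ldots,v_{m,l_m}\}$, the last $m+1$ vertices of the longest path. Then $G-S\cong[B(l_1,\ldots,l_{m-1},l_m-m-1)]^2$ is a B-graph square with $m$ paths and strictly smaller total length, so the inductive hypothesis supplies an equitable $L$-coloring of $G-S$. Listing $S$ from innermost to outermost along path $m$, only the two innermost vertices of $S$ have non-$S$ neighbors in $G$ (at most $m$ and $1$ such neighbors, respectively), and every other vertex of $S$ is interior to the chunk with all its $G$-neighbors inside $S$, so the inequalities $|N_G(x_i)-S|\leq m+1-i$ follow immediately.

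If instead $l_m\leq m+1$, set $t=m+1-l_m\geq 0$ and take
\[
S=\{v_{m,j}:j\in[l_m]\}\cup\{v_{j,l_j}:l_m-1\leq j\leq m-1\},
\]
which has $l_m+t=m+1$ vertices. Then $G-S$ is the square of a B-graph with $m-1$ paths, obtained from $B(l_1,\ldots,l_m)$ by deleting path $m$ entirely and shortening each of paths $l_m-1,\ldots,m-1$ by one. When $m\geq 4$ this new B-graph has at least three paths, so Lemma~\ref{lem: m+2} delivers an equitable $L$-coloring of $G-S$; when $m=3$ the new graph is a path square, and Proposition~\ref{cor: pathsquare} does the job (the awkward small instance $B(1,3,3)$ having been handled as a base case). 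For the ordering I would place $v_{m,1}$ at $x_1$ (with $|N_G-S|=m$, counting $u$ and the $v_{j,1}$ for $j<m$), the $t$ endpoints $v_{j,l_j}$ at $x_2,\ldots,x_{t+1}$ (each with $|N_G-S|=2$, using $l_j\geq 3$), the vertex $v_{m,2}$ at $x_{t+2}$ (with $|N_G-S|=1$, namely $u$), and the interior path-$m$ vertices $v_{m,3},\ldots,v_{m,l_m}$ at the remaining positions (each with $|N_G-S|=0$). The main obstacle is this position-by-position verification of the Lemma~\ref{lem: KPW} hypotheses in the second case, where $S$ mixes multiple vertex types; the crucial arithmetic is that $l_m\geq 3$ (a consequence of $l_2\geq 3$) gives $t+1\leq m-1$, and that each $v_{j,l_j}$ with $j\geq 2$ contributes exactly two non-$S$ neighbors because $l_j\geq 3$.
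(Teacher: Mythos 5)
Your proof is correct, but your inductive step is organized quite differently from the paper's. The paper, after reducing to $l_2 \geq 3$, splits on $l_m \geq 4$ versus $l_m = 3$: in the first case it deletes the last vertex of each of paths $2,\ldots,m-1$ together with the last three vertices of path $m$ (a set of size $m+1$) and recurses on the same $m$-path family; in the second case $C \in \{3m-2,3m-1,3m\}$, and it deletes a set of only $d = C-(2m+1)$ path endpoints, which forces it to check the remainder condition $r \leq t$ of Lemma~\ref{lem: KPW} with $t < k$ and to patch the degenerate instance $B(1,3,3)$ (where $d=0$) via Lemma~\ref{lem: problemcase}. You instead split on whether $l_m \geq m+2$: when the longest path is long you chop its last $m+1$ vertices and recurse within the same $m$, and when $l_m \leq m+1$ you delete all of path $m$ plus $t = m+1-l_m$ endpoints of other paths, landing on an $(m-1)$-path $B$-graph square that you color by Lemma~\ref{lem: m+2} applied with $m-1 \geq 3$ paths (so at $k=(m-1)+2=m+1$), or by Proposition~\ref{cor: pathsquare} when $m=3$. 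Both routes use Lemma~\ref{lem: KPW} as the engine and Lemma~\ref{lem: star} for $l_2 \leq 2$, and your neighbor-count verifications check out (in your first case the innermost deleted vertex actually has at most $2$, not $m$, non-$S$ neighbors, but your stated bound is still a valid upper bound within $k-1$; the key inequality $t+1 \leq m-1$ in your second case is exactly right since $l_m \geq l_2 \geq 3$). What your variant buys: $|S| = k$ in every case, so the $r \leq t$ hypothesis of Lemma~\ref{lem: KPW} is automatic, and the cross-$m$ reuse of Lemma~\ref{lem: m+2} even makes Lemma~\ref{lem: problemcase} dispensable, since your second case with $S=\{v_{3,1},v_{3,2},v_{3,3},v_{2,3}\}$ already handles $B(1,3,3)$; the paper's reduction, by contrast, stays entirely within the fixed-$m$ family and needs the extra small-case analysis.
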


\begin {proof}
We will prove the desired by induction on $\sum_{i=1}^m l_i$. Let $C= \sum_{i=1}^m l_i$. Let $L$ be an arbitrary $(m+1)$-assignment for $G$. We will show an equitable $L$-coloring of $G$ exists for each $C \geq m$. For the base case suppose that $m \leq C \leq 3m-3$. Since $C \leq 3m-3$, we know that $l_1 \leq l_2 \leq 2$. So, the desired result holds by Lemma~\ref{lem: star}. 

For the inductive step suppose that $C \geq 3m-2$ and assume that the desired result holds for all natural numbers less than $C$ and at least $m$. Note that when $l_2 \leq 2$ the result holds by Lemma~\ref{lem: star}; so, we may assume that $l_2 \geq 3$. 

If $l_m \geq 4$ we let $S= \{v_{j,l_j}: 2\leq j \leq m-1 \} \cup \{v_{m,l_m-2},v_{m,l_m-1}, v_{m,l_m}\}$. By the inductive hypothesis, $G-S$ is equitably $L$-colorable. Now let $x_i=v_{i+1,l_{i+1}}$ for all $i \in [m-2]$, $x_{m-1} = v_{m,l_m-2}$, $x_m=v_{m,l_m-1}$, and $x_{m+1} = v_{m,l_m}$. Note that $|N_G(x_i) -S| =2 \leq (m+1)-i$ for all $i \in [m-1]$, $|N_G(x_m) -S| =1$, and  $|N_G(x_{m+1})-S|=0$. Thus, Lemma~\ref{lem: KPW} implies that $G$ is equitably $L$-colorable.

Now, assume that $l_m \leq 3$ (note that this means $l_m = 3$ since $l_2 \geq 3$).  This implies that $3m-2 \leq C \leq 3m$.  Let $d=C-(2m+1)$.  In the case that $C=3m-2$, we may assume that $m \geq 4$ by Lemma~\ref{lem: problemcase}. Note that $m-3 \leq d \leq m-1$ and that $|V(G)|-d= C+1-d= 2m+2$. We let $S=\{v_{2,l_2}, \ldots , v_{1+d,l_{1+d}}\}$. By the inductive hypothesis, we know that $G-S$ has an equitable $L$-coloring. Let $x_i = v_{i+1, l_{i+1}}$ for all $i \in [d]$. Note that $|N_G(x_i)-S|= 2 \leq (m+1)-i$ for all $i \in [d]$. Lemma~\ref{lem: KPW} then implies that $G$ has an equitable $L$-coloring.  The induction step is now complete.
\end{proof}

Finally, notice the result of Theorem~\ref{thm: star} is implied by Theorem~\ref{thm: B-graph} when $m \geq 3$, and the result of Theorem~\ref{thm: star} is implied by Proposition~\ref{cor: pathsquare} when $m=1,2$. 

\section{Generalized Theta Graphs} \label{theta}

In this Section we will prove Theorem~\ref{thm: theta}.  Throughout this section if $G=\Theta(l_1, \ldots, l_m)$, we will assume that the vertices that are the common endpoints in $V(G)$ are $u$ and $w$. We also let the vertices of the $i$th path be \footnote{Notice that if $l_1=1$, the first path has no internal vertices.}: 
$$u, v_{i,1}, \ldots , v_{i,l_i-1}, w.$$
When it comes to proving Theorem~\ref{thm: theta}, it is crucial to note that if $G=\Theta(l_1, \ldots, l_m)$, then $T(G)$ is a copy of $[\Theta(2l_1, \ldots, 2l_m)]^2$ where $2l_2 \geq 4$ whenever $m \geq 2$.  So, the results in this Section will focus upon the equitable choosability of the squares of generalized theta graphs with sufficiently long paths.  Notice that if $G=\Theta(l_1, \ldots, l_m)$, $T(G)$ is a path square and cycle square on at least 6 vertices when $m$ is 1 and 2 respectively.  So, when $m=1,2$ the result of Theorem~\ref{thm: theta} is implied by Proposition~\ref{cor: pathsquare} and the following result.

\begin{pro} [\cite{KM18}] \label{pro: cyclesquare}
Suppose that $p , n \in \N$ with $p \geq 2$ and $n \geq 2p+2$.  Then, $C_{n}^p$ is equitably $k$-choosable for each $k \geq 2p$.
\end{pro}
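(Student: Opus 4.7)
The plan is to split into two cases according to whether $k \geq 2p+1$ or $k = 2p$; the former will follow from a direct application of Lemma~\ref{lem: KPW} together with Proposition~\ref{cor: pathsquare}, while the latter requires a genuinely different approach because Lemma~\ref{lem: KPW} turns out to be intrinsically insufficient in the tight regime.

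For $k \geq 2p+1$, let $L$ be an arbitrary $k$-assignment for $G = C_n^p$. Assuming $n > k$ (otherwise there is nothing to do), I would take $S = \{v_0, v_1, \ldots, v_{k-1}\}$ to be a block of $k$ consecutive vertices of the cycle. Then $G - S$ is isomorphic to $P_{n-k}^p$ and is therefore equitably $L$-colorable by Proposition~\ref{cor: pathsquare}. A direct computation shows that $|N_G(v_j) - S|$ equals $p - j$ for $0 \leq j \leq p-1$, equals $0$ for $p \leq j \leq k-1-p$, and equals $j + p - k + 1$ for $k - p \leq j \leq k - 1$. Since $k \geq 2p+1$ guarantees the middle range is nonempty, sorting these values in non-increasing order yields $(p, p, p-1, p-1, \ldots, 1, 1, 0, 0, \ldots, 0)$, a sequence that is termwise bounded by $(k-1, k-2, \ldots, 0)$. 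Lemma~\ref{lem: KPW} then finishes the argument for this case.

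The main obstacle is the tight case $k = 2p$. A short calculation shows that Lemma~\ref{lem: KPW} cannot be applied in this regime under any choice of $S$ of size $t \leq k$: for any $v \in S$ we have $|N_G(v) - S| \geq 2p - (t-1)$, so the requirement $|N_G(x_t) - S| \leq k - t$ at the last position of the ordering forces $k \geq 2p + 1$. To handle $k = 2p$, I would first dispatch the base case $n = 2p+2$ by hand: here $C_{2p+2}^p$ is isomorphic to the cocktail party graph $K_{(p+1) \times 2}$, its only non-edges are the $p+1$ antipodal pairs $\{v_i, v_{i+p+1}\}$, and any equitable $L$-coloring with $2p$ colors on $2p+2$ vertices must consist of exactly two monochromatic antipodal pairs together with $2p - 2$ singleton classes. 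I would produce such a coloring via a Hall-style matching argument between the $p+1$ antipodal pairs and the colors lying in the pairwise intersection of their lists.

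For $n \geq 2p+3$ with $k = 2p$, I would proceed by induction on $n$: remove a block of $k$ consecutive vertices, color the residual path square $P_{n-k}^p$ via Proposition~\ref{cor: pathsquare}, and then carefully extend to the returned block by an argument that permits one of the returned vertices to share its color with another already-colored vertex (thereby respecting the equitability bound $\lceil n/k \rceil$ while circumventing the impossibility of using $k$ distinct colors on $S$). Controlling this single color reuse while preserving propriety is the delicate ingredient that Lemma~\ref{lem: KPW} does not provide, and I expect that bookkeeping the color multiplicities during this final extension will be the most intricate part of the proof.
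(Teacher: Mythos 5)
A preliminary remark: this paper does not prove Proposition~\ref{pro: cyclesquare} at all --- it is imported from~\cite{KM18} --- so there is no in-paper argument to compare against, and your proposal is in effect an attempt to reprove the cited result. Your case $k \geq 2p+1$ is correct and is exactly the block-deletion technique used throughout this paper: removing $k$ consecutive vertices leaves precisely $P_{n-k}^p$ (no wrap-around adjacencies, since the cyclic distance around the deleted block is at least $k+1 > p$), Proposition~\ref{cor: pathsquare} colors it, and your sorted neighbor-count sequence is dominated by $(k-1, k-2, \ldots, 0)$ with equality only at $k=2p+1$, so Lemma~\ref{lem: KPW} applies. (Your ``equals'' for $|N_G(v_j)-S|$ should be ``at most'' when $n < k+p$, but upper bounds are all the lemma needs.) Your observation that Lemma~\ref{lem: KPW} can never be applied when $k=2p$, because every vertex has degree $2p$ and hence $|N_G(x_t)-S| \geq 2p-t+1 > k-t$, is also correct and correctly locates the real difficulty.

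The gap is that the tight case $k=2p$ --- the only part with genuine content --- is not proved, and the plan you sketch for it has a concrete flaw. For $n=2p+2$ you assert that every equitable $L$-coloring ``must consist of exactly two monochromatic antipodal pairs together with $2p-2$ singleton classes.'' That is false: an equitable $L$-coloring is only forbidden from using any color more than $\lceil (2p+2)/(2p)\rceil = 2$ times; it is not confined to $2p$ colors in total, so it may consist entirely of singletons. Worse, your proposed Hall-style matching between antipodal pairs and colors common to both lists of a pair can have nothing to match: for $p=2$, give $v_0,v_1,v_2$ the list $\{1,2,3,4\}$ and $v_3,v_4,v_5$ the list $\{5,6,7,8\}$; every antipodal pair has disjoint lists, so no monochromatic pair exists, yet an equitable $L$-coloring does (a rainbow one). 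So the base case needs an argument that also covers all-singleton colorings and colorings using more than $k$ colors, which your matching scheme does not. For $n \geq 2p+3$ you only describe the extension step and defer its justification (``bookkeeping the color multiplicities \ldots will be the most intricate part''), but that is precisely the step requiring proof: after coloring $P_{n-2p}^p$, a color may already be used $\lceil (n-2p)/(2p)\rceil$ times, which is the global bound $\lceil n/(2p)\rceil$ minus one, so the one color you repeat on the returned block must be chosen underused (or new) while every block vertex still has at least one external neighbor constraining it; nothing in the sketch shows such a choice always exists. As written, the $k=2p$ case remains unproven.
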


So, to complete the proof of Theorem~\ref{thm: theta}, we may focus our attention on the case where $m \geq 3$.  We begin by proving the following result.

\begin{thm} \label{thm: thetam+3}
For $m \geq 3$, $l_1 \geq 2$, and $l_2 \geq 4$,  $ [\Theta(l_1, l_2,\ldots,l_m)]^2$ is equitably $k$-choosable whenever $k \geq m+3$.
\end{thm}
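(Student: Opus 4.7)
The strategy mirrors Lemma~\ref{lem: m+3}: construct a set $S\subseteq V(G)$ of size at most $k=m+3$ whose removal leaves a graph with maximum degree small enough for Theorem~\ref{thm: KKresult} to give an equitable $L$-coloring of $G-S$, and then insert the vertices of $S$ one by one via Lemma~\ref{lem: KPW}. Let $L$ be an arbitrary $k$-assignment for $G$; we may assume $k=m+3$ and $|V(G)|>k$.

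A first attempt is the candidate set
\[ S \;=\; \{u,w\} \;\cup\; \{v_{i,1}:i\in[m]\} \;\cup\; \{v_{m,l_m-1}\}, \]
of size exactly $m+3$. After this deletion the entire ``$u$-side clique'' on $\{v_{i,1}:i\in[m]\}$ is gone, so each $v_{i,2}$ retains only the within-path neighbors $v_{i,3}$ and $v_{i,4}$ (both exist for $i\geq 2$ because $l_2\geq 4$, with separate small cases handling path~$1$ when $l_1\in\{2,3\}$) and has degree at most $2$ in $G-S$. Removing $w$ together with $v_{m,l_m-1}$ reduces each surviving $v_{i,l_i-1}$ to at most $m$ neighbors in $G-S$, while generic interior vertices retain degree at most $4$. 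Hence $\Delta(G-S)\leq m$, so Theorem~\ref{thm: KKresult} applies whenever $k=m+3$ meets the KK threshold for $\Delta=m$.

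Next I would verify Lemma~\ref{lem: KPW} with the ordering $x_1=u$, $x_2=w$, then the $v_{i,1}$'s in any order as $x_3,\dots,x_{m+2}$, and $x_{m+3}=v_{m,l_m-1}$. The bound $|N_G(u)-S|\leq k-1=m+2$ is immediate, since $N_G(u)\subseteq \{v_{i,1},v_{i,2}:i\in[m]\}\cup\{w\}$ and most of these already lie in $S$. The bound $|N_G(w)-S|\leq k-2=m+1$ is the delicate one, because the $w$-side clique on $\{v_{i,l_i-1}:i\in[m]\}$ largely survives in $G-S$; I would split on $l_1\in\{2,3\}$ versus $l_1\geq 4$, exploiting that the small cases contribute overlap vertices such as $v_{1,1}\in N_G(u)\cap N_G(w)$ (and $u\in N_G(w)$ when $l_1=2$). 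The remaining bounds for the $v_{i,1}$'s and for $v_{m,l_m-1}$ are routine, since each such vertex has most of its neighbors inside $S$.

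The main obstacle I expect is the $w$-constraint in the generic case $l_1\geq 4$ with $m$ large: here $N_G(u)$ and $N_G(w)$ are essentially disjoint, each of size $2m$, so the $m+3$ slots in $S$ must be distributed very carefully between the two sides. The remedy, paralleling the transition from Lemma~\ref{lem: m+3} to Lemma~\ref{lem: m+2} in the subdivision-of-star section, is to trade one or more $v_{i,1}$'s in $S$ for vertices of the form $v_{j,l_j-1}$ or deeper-interior vertices of path $m$ (for instance $v_{m,l_m-2}$ or $v_{m,l_m-3}$), using the two-unit slack on the $u$-side to pay for strengthening the $w$-side. The hypothesis $l_2\geq 4$ ensures that these trade candidates are distinct from the $v_{i,1}$'s and from each other, and the gap $k\geq m+3$ (rather than the tighter $m+2$ of the LETCC) is precisely what makes both Lemma~\ref{lem: KPW} inequalities simultaneously satisfiable after the swap.
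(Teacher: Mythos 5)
There is a genuine gap, and it sits exactly where you flagged uncertainty. With $|S|=m+3$ you can cover essentially only one of the two hub regions of the theta graph, so $G-S$ necessarily retains a large clique/high-degree vertex around the other hub: in your candidate $S$ the surviving vertices $v_{i,l_i-1}$ ($i\neq m$) still form a clique through $w$, giving $\Delta(G-S)$ on the order of $m$, and Theorem~\ref{thm: KKresult} then requires at least $m+\frac{m+6}{7}$ colors (for $8\le m\le 30$) or $m+\frac{m}{6}$ (for $m\ge 31$), which exceeds $k=m+3$ once $m\ge 16$. Your conditional phrase ``whenever $k=m+3$ meets the KK threshold for $\Delta=m$'' is precisely the unproved case. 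The proposed trade does not repair this: to make the Lemma~\ref{lem: KPW} inequality for $w$ hold you need roughly $m-1$ of the $2m$ or so vertices of $N_G(w)$ inside $S$ (note $|N_G(w)-S|\approx 2m-3>m+2$ for $m\ge 6$, so no position in the ordering helps), but then only a handful of slots remain on the $u$-side and $G-S$ instead contains the large clique around $u$, and Theorem~\ref{thm: KKresult} fails there for the same reason. The missing idea, which is the paper's key move in Lemma~\ref{lem: thetam+3}, is to leave $w$ and its whole neighborhood out of $S$ entirely, delete $u$, all $v_{i,1}$, and a few second-layer vertices on path $m$, and then observe that $G-S$ is (up to isomorphism) a square of a subdivision of a star $[B(a_1,\ldots,a_r)]^2$ centered at $w$ with $r\le m$; its equitable $L$-coloring comes from Lemma~\ref{lem: m+3} (the result you proved for star subdivisions), not from Theorem~\ref{thm: KKresult}.

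A second, smaller but real error: you write ``we may assume $k=m+3$,'' but equitable choosability is not monotone in $k$ (the paper stresses this with $K_{1,9}$), so the statement must be proved for every $k\ge m+3$ separately. The paper does this by splitting into $k\ge 2m+2$ (Lemma~\ref{lem: theta2m+2}), where $S$ can be padded to size $k$ so that both hub regions are covered and $\Delta(G-S)\le 4$ makes Theorem~\ref{thm: KKresult} legitimate, and $m+3\le k\le 2m+1$ (Lemma~\ref{lem: thetam+3}), where the star-subdivision lemma carries the load. Your outline covers neither range correctly as written.
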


We will establish two lemmas that will immediately imply Theorem~\ref{thm: thetam+3}.

\begin{lem}\label{lem: theta2m+2}

Suppose that $m \geq 3$.  Suppose $H=\Theta(l_1, l_2, \ldots, l_m)$ where $l_1 \geq 2$ and $l_2 \geq 4$. If $G= H^2$, then $G$ is equitably $k$-choosable whenever $k \geq 2m+2$. 

\end{lem}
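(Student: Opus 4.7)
My plan is to apply Lemma~\ref{lem: KPW} to $G$ with an arbitrary $k$-assignment $L$ and a carefully chosen set $S \subseteq V(G)$ of size $t \in [r,k]$, where $|V(G)| = kq+r$ and $1 \leq r \leq k$. I would start with the core set $S_0 = \{u,w\} \cup \{v_{i,1}, v_{i,l_i-1} : i \in [m]\}$, which has $2m+2$ elements when $l_1 \geq 3$ and $2m+1$ elements when $l_1 = 2$, and then extend $S_0$ by adjoining a consecutive block of path-$m$ vertices ($v_{m,2}, v_{m,3}, v_{m,4}, \ldots$) until $|S|=t$.

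Two things would then need to be verified. First, $G-S$ has maximum degree at most $4$: once $u$, $w$, and every hub-adjacent vertex $v_{i,1}, v_{i,l_i-1}$ has been removed, any cross-path pair of vertices in $G = H^2$ at $H$-distance $\leq 2$ must pass through $u$ or $w$ in $H$, so every inter-path edge of $G$ disappears and $G-S$ becomes a disjoint union of induced subgraphs of path-squares of maximum degree at most $4$. Because $k \geq 2m+2 \geq 8 > 5$, Theorem~\ref{thm: KKresult} then supplies an equitable $L$-coloring of $G-S$. Second, I would order $S$ as $x_1,\ldots,x_t$ so that $|N_G(x_i)-S| \leq k-i$: place $w$ at $x_1$ and $u$ at $x_2$ (both satisfy $|N_G(v)-S| \leq m+2 \leq k-2$), place the remaining hub-adjacent vertices in middle positions (their $|N_G(v)-S|$ is at most $4$, which fits comfortably into positions $i \leq 2m-2$), and reserve $v_{m,1}$ for $x_t$: once $v_{m,2}$ and $v_{m,3}$ have been adjoined, the full $G$-neighborhood $\{u, v_{m,2}, v_{m,3}, v_{j,1} : j \neq m\}$ of $v_{m,1}$ lies inside $S$, yielding $|N_G(x_t)-S| = 0$.

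The main obstacle is the boundary regime $k \in \{2m+2, 2m+3\}$ combined with $l_1 \geq 4$. When $l_1 \in \{2,3\}$, the set $S_0$ already contains naturally isolated vertices ($v_{1,1}$ when $l_1 = 2$, and both $v_{1,1}$ and $v_{1,2}$ when $l_1 = 3$), so the ordering completes without needing a path-$m$ block. When $l_1 \geq 4$, however, $k-(2m+2) \in \{0,1\}$ leaves no room to adjoin both $v_{m,2}$ and $v_{m,3}$ while keeping $|S| \leq k$. I would resolve this by either (a) taking $t = k-1$ whenever $r \leq k-1$, so that $x_t$ only requires $|N_G - S| \leq 1$, a bound attained by $v_{m,1}$ once $v_{m,2}$ alone has been adjoined, or (b) when $r=k$ forces $t=k$, swapping a few hub-adjacent vertices out of $S_0$ (namely $v_{m,l_m-1}$ when $k=2m+3$, and also $v_{m-1,l_{m-1}-1}$ when $k=2m+2$) in exchange for the block $\{v_{m,2}, v_{m,3}\}$, keeping $|S|=k$ and making $v_{m,1}$ isolated. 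A short case analysis on $l_1$, $l_m$, and $r \bmod k$ then handles the residual sub-cases uniformly.
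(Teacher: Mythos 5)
Your overall strategy (remove the two hubs and the hub-adjacent vertices plus a short run of path-$m$ vertices, color the remainder of maximum degree at most $4$ by Theorem~\ref{thm: KKresult}, and finish with Lemma~\ref{lem: KPW}) is the same as the paper's, and your degree analysis of $G-S$ is right. But your handling of the tight regime $k\in\{2m+2,2m+3\}$ with $l_1\geq 4$ has a genuine flaw, and that is exactly the case your proposal is supposed to resolve. Your fix (a) is arithmetically impossible: with $l_1\geq 4$ your core has $|S_0|=2m+2$, so when $k=2m+2$ a set of size $t=k-1=2m+1$ cannot even contain $S_0$, and when $k=2m+3$ a set of size $t=k-1=2m+2$ cannot contain $S_0\cup\{v_{m,2}\}$; so ``take $t=k-1$ and adjoin $v_{m,2}$ to $S_0$'' is self-contradictory precisely where it is invoked. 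The $r$-based dichotomy driving (a)/(b) is also beside the point: Lemma~\ref{lem: KPW} always permits $t=k$ (since $r\leq k$), so nothing is gained by taking $t=k-1$, and the only workable device in your proposal is the swap in (b) — dropping rim vertices $v_{i,l_i-1}$ in exchange for $v_{m,2},v_{m,3}$ — which must be used for all $r$, not just when $r=k$. Two smaller loose ends: your ordering puts all ``middle'' vertices (outside-degree up to $4$) in positions $i\leq 2m-2$, but when $t=k=2m+2$ there are $2m-3$ of them and only $2m-4$ such positions, so one must occupy position $k-3$ and hence needs outside-degree at most $3$ (a vertex $v_{j,1}$ with $j\neq m$ works, but this has to be said); and in the $l_1=2$ subcase your claim that no path-$m$ vertex is needed fails when all $l_j\geq 5$, since then position $k-1$ has no candidate with at most one outside neighbor unless you deliberately adjoin $v_{m,2}$ and place $v_{m,1}$ there.

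Once you commit to the swap and to $t=k$ throughout, you have essentially reconstructed the paper's proof, which avoids the problem from the outset: its core is $S_0=\{u,w,v_{m,2},v_{m,3}\}\cup\{v_{i,1}:i\in[m]\}\cup\{v_{i,l_i-1}:3\leq i\leq m\}$ — the rim vertices $v_{1,l_1-1}$ and $v_{2,l_2-1}$ are omitted, so $|S_0|\leq 2m+2\leq k$ while $v_{m,2},v_{m,3}$ are already inside — then $S$ is padded by arbitrary vertices to size exactly $k$ (arbitrary padding suffices, so there is no risk of a single path running out of vertices), $G-S_0$ is still a spanning subgraph of a disjoint union of path squares, and the ordering $x_1=w$, $x_2=u$, $x_{k-3}=v_{2,1}$, $x_{k-2}=v_{m,3}$, $x_{k-1}=v_{m,2}$, $x_k=v_{m,1}$ satisfies the hypotheses of Lemma~\ref{lem: KPW} with no case analysis on $l_1$, $l_m$, or $r$.
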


\begin{proof}

The result is obvious when $k\geq |V(G)|$; thus, we will assume that $2m+2 \leq k < |V(G)| = 2+ \sum_{i=1}^m (l_{i}-1)$. Suppose $L$ is an arbitrary $k$-assignment for $G$. Let

$$S_0=\{u, w, v_{m,2}, v_{m,3}\} \cup \{v_{i,1}: 1\leq i \leq m\}\cup \{v_{i,l_i-1}: 3 \leq i \leq m\}.$$

Note that $G-S_0$ is a spanning subgraph of a disjoint union of path squares. Let $d=k-|S_0|$. Let $S_1$ be an arbitrary subset of $V(G)-S_0$ of size $d$. Then let $S=S_0 \cup S_1$. Note that $G-S$ has maximum degree at most 4.  By Theorem~\ref{thm: KKresult}, there is an equitable $L$-coloring of $G-S$. We will name the vertices of $S$: $x_1, x_2, \ldots, x_{k}$ such that $x_1=w$, $x_2=u$, $x_{k-3}=v_{2,1}$, $x_{k-2}=v_{m,3}$, $x_{k-1}=v_{m,2}$, $x_k=v_{m,1}$. We then arbitrarily name the remaining vertices in $S$: $x_3, x_4, \ldots , x_{k-4}$ in an injective fashion. For $i \in \{k-3, k-2, k-1, k\}$ we have $|N_G(x_i)-S| \leq k-i$. For $3 \leq i \leq k-4$ we have that $|N_G(x_i)-S| \leq 4 \leq k-i$. Finally, $|N_G(x_2)-S| \leq m-1 \leq k-2$ and $|N_G(x_1)-S| \leq m+2 \leq k-1$. By Lemma~\ref{lem: KPW}, there is an equitable $L$-coloring for $G$.

\end{proof}

\begin{lem}\label{lem: thetam+3}

Suppose that $m \geq 3$.  Suppose $H=\Theta(l_1, l_2, \ldots, l_m)$ where $l_1 \geq 2$ and $l_2 \geq 4$. If $G= H^2$, then $G$ is equitably $k$-choosable whenever $m+3 \leq k \leq 2m+1$.

\end{lem}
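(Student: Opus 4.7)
The plan is to refine the construction of Lemma~\ref{lem: theta2m+2} so that it fits the smaller size budget $|S| = k \leq 2m+1$. Let $L$ be an arbitrary $k$-assignment and let
\[
S_0 = \{u, w, v_{m,2}, v_{m,3}\} \cup \{v_{i,1}: i \in [m]\} \cup \{v_{i, l_i-1}: 3 \leq i \leq m\},
\]
which has $|S_0| = 2m+2$. I would obtain $S$ of size $k$ by deleting $2m+2-k$ vertices of $S_0$ drawn from the ``movable'' families $\{v_{i,1}: 2 \leq i \leq m-1\}$ and $\{v_{i, l_i-1}: 3 \leq i \leq m-1\}$, thereby preserving the tail $v_{m,1}, v_{m,2}, v_{m,3}$ and the vertices $u, w$. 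This preserves the ordering $x_1 = w, x_2 = u, x_{k-2} = v_{m,3}, x_{k-1} = v_{m,2}, x_k = v_{m,1}$ used in Lemma~\ref{lem: theta2m+2}, with any surviving movable vertex serving as $x_{k-3}$. Because $u, w \in S$, the graph $G - S$ has maximum degree at most $4$, so Theorem~\ref{thm: KKresult} delivers an equitable $L$-coloring of $G - S$.

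Writing $r_1$ and $r_2$ for the number of vertices removed from each movable family, the Lemma~\ref{lem: KPW} inequalities $|N_G(u) \cap S| \geq 2m-k+2$ and $|N_G(w) \cap S| \geq 2m-k+1$ translate (using $|N_G(u) \cap S_0| = m+1$ and, when $l_m \geq 6$, $|N_G(w) \cap S_0| = m-2$) into $r_1 \leq k-m-1$ and $r_2 \leq k-m-3$, subject to $r_1 + r_2 = 2m+2-k$. A short calculation shows this system is solvable precisely when $k \geq \lceil(4m+6)/3\rceil$. Moreover, when $l_m \in \{4,5\}$ the vertices $v_{m,2}, v_{m,3} \in S_0$ also lie in $N_G(w)$, which relaxes the $r_2$-bound enough to cover additional small values of $k$ in this short-path case.

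The main obstacle is the subrange $m+3 \leq k < \lceil(4m+6)/3\rceil$ with $l_m \geq 6$, which is nonempty for $m \geq 4$. Here the $S_0$-modification fails, and I would switch to a different template: include a triple $\{v_{j,1}, v_{j,2}, v_{j,3}\}$ of consecutive internal vertices on some path $j$. This makes $v_{j,1}$ available as $x_k$ (since its $H^2$-neighbours $u, v_{j,2}, v_{j,3}$ all lie in $S$ once $u \in S$) and contributes two neighbours of $u$ to $S$. When some $l_j \leq 5$---in particular if $l_1 \leq 5$, where the triple is placed on path~$1$---the vertex $v_{j,3}$ is within $H$-distance $2$ of $w$ and therefore also contributes a neighbour of $w$, rebalancing the inequalities. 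In the hardest remaining subcase (every $l_i \geq 6$), I would remove $u$ or $w$ from $S$, spend the freed slot on an extra opposite-side neighbour, and verify that the resulting $\deg_{G-S}(u)$ or $\deg_{G-S}(w)$ lies below the threshold $\Delta + (\Delta+6)/7$ that Theorem~\ref{thm: KKresult} demands at the given $k$. The delicate part is the arithmetic of this last subcase for large $m$: one must include enough $v_{i,1}$ (and occasionally $v_{i,2}$) vertices in $S$ to drive $\Delta(G-S)$ below the relevant Theorem~\ref{thm: KKresult} clause, while still leaving room for the triple and for the Lemma~\ref{lem: KPW} tail constraints.
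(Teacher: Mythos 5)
Your proposal has a genuine gap, and it sits exactly where you locate the difficulty. For the subrange $m+3 \leq k < \lceil (4m+6)/3 \rceil$ with all $l_i$ large, your plan is to drop $u$ or $w$ from $S$ and then invoke Theorem~\ref{thm: KKresult} on $G-S$, after putting enough neighbors of the dropped hub into $S$ to push $\deg_{G-S}$ of that hub under the relevant threshold. The arithmetic does not close: when every $l_i \geq 6$, the dropped hub (say $w$) has $2m$ neighbors in $G$, and Theorem~\ref{thm: KKresult} at $k \approx m+3$ forces $\Delta(G-S) \lesssim 7k/8$, so you would need on the order of $9m/8$ neighbors of $w$ inside $S$; meanwhile the hub you keep in $S$ (say $u$, with $|N_G(u)|=2m$) needs at least about $m-2$ of its neighbors in $S$ just to satisfy the Lemma~\ref{lem: KPW} inequality $|N_G(u)-S| \leq k-1$, and these two neighbor sets are disjoint when all paths are long. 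The total exceeds $|S|=k=m+3$ once $m$ is moderately large, so the ``hardest remaining subcase'' cannot be handled by any choice of $S$ within your framework; no amount of rebalancing with triples $\{v_{j,1},v_{j,2},v_{j,3}\}$ fixes this, because the bottleneck is the degree of the excluded hub versus the size budget of $S$. (A secondary slip: after deleting several $v_{i,1}$ from $S_0$, the claim $\Delta(G-S)\leq 4$ is false, since the vertices $v_{i,1}$ are pairwise adjacent in $H^2$ through $u$; this is repairable, but worth noting.)

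The paper's proof avoids the impasse with one structural observation you are missing: it never asks $G-S$ to have small maximum degree in this range. It takes $S=\{u,v_{m,3}\}\cup\{v_{i,1}: i\in[m]\}\cup\{v_{i,2}: 2m+3-k \leq i \leq m\}$ (size exactly $k$, with $w \notin S$), and notes that $G-S$ is then isomorphic to $[B(a_1,\ldots,a_r)]^2$ for some $r \leq m$ --- the square of a subdivision of a star centered at the high-degree vertex $w$ --- so the already-proved Lemma~\ref{lem: m+3} (or Theorem~\ref{thm: KKresult} when $r\leq 2$) supplies the equitable $L$-coloring of $G-S$ even though $w$ retains large degree there. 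The remaining Lemma~\ref{lem: KPW} bookkeeping is then easy because all neighbors of $u$ of the form $v_{i,1}$ lie in $S$. In short, the missing idea is to exploit the Section~\ref{stars} results on squares of subdivided stars as the tool for coloring $G-S$ with its hub intact, rather than trying to force $G-S$ into the bounded-degree regime of Theorem~\ref{thm: KKresult}.
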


\begin{proof}
Suppose that $L$ is an arbitrary $k$-assignment for $G$ such that $m+3 \leq k \leq 2m+1$. We let $S = \{u,v_{m,3}\} \cup \{v_{i,1}: i \in [m]\} \cup \{v_{i,2}: 2m+3-k \leq i \leq m \}$. Note that $|S|=k$. Note that there is an $r \in \{m-2,m-1,m\}$ and natural numbers $a_1, \ldots, a_r$ with $a_1 \leq \cdots \leq a_r$ such that $G-S$ is isomorphic to $[B(a_1,a_2,\ldots, a_r)]^2$. When $r \leq 2$, we know that $G-S$ has an equitable $L$-coloring by Theorem~\ref{thm: KKresult} since $\Delta(G-S) \leq 4$.  When $r \geq 3$, we know that $G-S$ has an equitable $L$-coloring by Lemma~\ref{lem: m+3}. Let $x_1 = v_{m,3}$, $x_2=v_{1,1}$, $x_3 = u$, $x_{k-1}=v_{m,2}$, and $x_k = v_{m,1}$. Finally, we arbitrarily name the remaining vertices in $S$: $x_3, x_4, \ldots , x_{k-2}$ in an injective fashion. Note that $|N_G(x_1)-S| \leq m \leq k-1$, $|N_G(x_2)-S| \leq m \leq k-2$, $|N_G(x_3)-S| \leq m-1 \leq k-3$, $|N_G(x_{k-1})-S| = 1$, and $|N_G(x_k)-S|=0$. Finally note that $|N_G(x_i)-S| \leq 2 \leq k-i$ for all $4 \leq i \leq k-2$. So by Lemma~\ref{lem: KPW} we know that $G$ has an equitable $L$-coloring.
\end{proof}

We are now finished with the proof of Theorem~\ref{thm: thetam+3}.  To complete the proof of Theorem~\ref{thm: theta} we must show that if $G = \Theta(l_1, l_2, \ldots, l_m)$, then $T(G)$ is equitably $k$-choosable when $m \geq 3$ and $k=m+2$.  Recall for $m \geq 3$ that if $G = \Theta(l_1, l_2, \ldots, l_m)$, then $T(G)$ is a copy of $[\Theta(2l_1, 2l_2, \ldots, 2l_m)]^2$ where $2l_2 \geq 4$.  So, we begin working on the case of $k=m+2$ by dealing with $[\Theta(2,4, \ldots, 4)]^2$ and $[\Theta(4, \ldots, 4)]^2$.  We will then finish the case of $k= m+2$ by considering $[\Theta(l_1, l_2, \ldots, l_m)]^2$ with $l_2 \geq 2$, $l_2 \geq	 4$, and $l_m \geq 6$.

We begin with two specific cases to which our general arguments do not apply.

\begin{lem}\label{lem: 3m 3}
Suppose $H=\Theta(2,4,4)$ and $G = H^2$. Then, $G$ is equitably $5$-choosable.
\end{lem}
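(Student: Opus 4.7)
The plan is to invoke Lemma~\ref{lem: KPW} with the minimal possible value of $t$. Because $|V(G)|=9=5\cdot 1 + 4$, we have $r=4$, and taking $t=r=4$ leaves a set $V(G)-S$ of exactly $k=5$ vertices. For any 5-assignment $L$, the 5-vertex subgraph $G-S$ will be automatically equitably $L$-colorable: each list has size $5=|V(G-S)|$, so Hall's marriage theorem yields a system of distinct representatives, giving a proper $L$-coloring of $G-S$ in which each color is used exactly once (hence at most $\lceil 5/5\rceil =1$ time). Thus the ``$G-S$ has an equitable $L$-coloring'' hypothesis of Lemma~\ref{lem: KPW} comes for free regardless of $L$.

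The set I would choose is $S=\{u,\,v_{2,1},\,v_{2,2},\,v_{2,3}\}$, the central vertex together with the entire length-4 branch indexed by $i=2$. The decisive feature is that in $G=H^2$ we have $N_G(v_{2,2})=\{u,w,v_{2,1},v_{2,3}\}$, so $v_{2,2}$ has only one neighbor ($w$) outside $S$. I would order the vertices of $S$ as $x_1=u$, $x_2=v_{2,3}$, $x_3=v_{2,1}$, $x_4=v_{2,2}$ and verify by direct inspection of $G=H^2$ that $|N_G(u)-S|=4$, $|N_G(v_{2,3})-S|=3$, $|N_G(v_{2,1})-S|=2$, and $|N_G(v_{2,2})-S|=1$, so $|N_G(x_i)-S|=5-i$ for each $i\in [4]$. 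Since $r=4\le t=4\le 5=k$, Lemma~\ref{lem: KPW} then produces the desired equitable $L$-coloring of $G$.

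The main obstacle is choosing $S$ correctly, since Lemma~\ref{lem: KPW} demands an ordering of $S$ in which successive vertices have strictly fewer external neighbors. A more symmetric attempt with $|S|=k=5$ would require $x_5$ to have no external neighbors and $x_4$ at most one, but the only vertices whose closed $G$-neighborhood fits in a 5-set are $v_{2,2}$ and $v_{3,2}$, and once $S$ is pinned to $N_G[v_{2,2}]=\{v_{2,2},v_{2,1},v_{2,3},u,w\}$ no remaining vertex of $S$ has $|N_G(\cdot)-S|\le 1$. Working with $t=4$ resolves this by weakening the final constraint to $|N_G(x_4)-S|\le 1$, which is exactly met by $v_{2,2}$ with its lone external neighbor $w$.
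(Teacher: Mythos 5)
Your proof is correct, but it takes a genuinely different route from the paper. You make a single application of Lemma~\ref{lem: KPW} with $t=r=4$ and $S=\{u,v_{2,1},v_{2,2},v_{2,3}\}$: your neighborhood counts check out ($|N_G(x_i)-S|=5-i$ with the ordering $u,v_{2,3},v_{2,1},v_{2,2}$, since $N_G(v_{2,2})=\{u,w,v_{2,1},v_{2,3}\}$), and your key observation is that the hypothesis ``$G-S$ has an equitable $L$-coloring'' is automatic because $G-S$ has exactly $k=5$ vertices with $5$-lists, so a rainbow proper $L$-coloring exists by Hall's theorem (or just greedily, avoiding at most four previously used colors). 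The paper instead argues in two cases: when all lists are identical it exhibits an explicit equitable coloring, and otherwise it deletes the whole branch $S_1=\{v_{2,1},v_{2,2},v_{2,3}\}$, uses the non-constancy of $L$ to properly color the six vertices of $G-S_1$ with six distinct colors, and then extends to the triangle $G[S_1]$ by a list-size count; the case split is forced there because six distinct colors from $5$-lists need not exist when the lists coincide. Your choice of $S$ (center plus one entire branch) sidesteps that issue entirely and gives a shorter, case-free argument for this lemma; the paper's two-case template has the advantage of being the same machinery it reuses for the harder $\Theta(2,4,4,4)$ case (Lemma~\ref{lem: 3m 4}), where a deletion set of this convenient shape is not available.
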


\begin{proof}
Suppose $L$ is an arbitrary $5$-assignment for $G$. We will show that $G$ has an equitable $L$-coloring in two cases: (1) $L(v)$ is the same list for each $v \in V(G)$ and (2) there exist $x, y \in V(G)$ such that $L(x) \neq L(y)$. In case (1) we may suppose $L(v) = \{1,2,3,4,5\}$ for each $v \in V(G)$. We let $f$ be the proper $L$-coloring of $G$ defined as follows: $f(u)=1$, $f(v_{1,1})=2$, $f(v_{2,1})=3$, $f(v_{3,1})=4$, $f(v_{2,2})=2$, $f(v_{3,2})=3$, $f(v_{2,3})=4$, $f(v_{3,3})=1$, and $f(w)=5$. Clearly, $f$ is an equitable $L$-coloring of $G$. 

We now turn our attention to case (2). Let $S_1 = \{ v_{2,1}, v_{2,2}, v_{2,3}\}$ and $S_2 = \{ v_{3,1}, v_{3,2}, v_{3,3}\}$. Note that $(V(G)-S_1) \cup (V(G)-S_2) = V(G)$ and $(V(G)-S_1) \cap (V(G)-S_2) \neq \emptyset$. So, it must be the case that there exists at least two elements in either $\{ L(v): v \in V(G)-S_1 \}$ or $\{ L(v) : v \in V(G)-S_2 \}$. We assume without loss of generality that there are at least two elements in $\{ L(v): v \in V(G)-S_1 \}$. It is then possible to find a proper $L$-coloring, $f$, of $G-S_1$ that uses six distinct colors.  Let $L'(v_{2,i}) = L(v_{2,i})-\{f(v): v \in N_G(v_{3,i})-S_1\}$ for each $i \in [3]$. Note that $|L'(v_{2,1})| \geq 2$, $|L'(v_{2,2})| \geq 3$, and $|L'(v_{2,3})| \geq 2$. So, there is a proper $L'$-coloring of $G[S_1]$.  Such a coloring completes an equitable $L$-coloring of $G$.
\end{proof}

\begin{lem}\label{lem: 3m 4}
Suppose $H=\Theta(2,4,4,4)$ and $G = H^2$. Then, $G$ is equitably $6$-choosable.
\end{lem}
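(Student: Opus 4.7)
My plan mirrors Lemma~\ref{lem: 3m 3} by splitting on whether the $6$-assignment $L$ is constant on $V(G)$. Since $|V(G)|=12$ and $\lceil 12/6\rceil = 2$, an equitable $L$-coloring must use every color exactly twice. If $L(v)=\{1,2,3,4,5,6\}$ for every $v$, I would exhibit the explicit coloring
\[
\begin{aligned}
&f(u)=1,\ f(w)=2,\ f(v_{1,1})=3,\\
&f(v_{2,1})=2,\ f(v_{2,2})=3,\ f(v_{2,3})=1,\\
&f(v_{3,1})=4,\ f(v_{3,2})=5,\ f(v_{3,3})=6,\\
&f(v_{4,1})=5,\ f(v_{4,2})=6,\ f(v_{4,3})=4,
\end{aligned}
\]
in which each color appears exactly twice. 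The propriety check reduces to verifying the adjacencies within each triangle $\{v_{i,1},v_{i,2},v_{i,3}\}$, between branches (pairs $v_{i,p}v_{j,p}$ with $i\neq j$ and $p\in\{1,3\}$), and to the central triangle $\{u,w,v_{1,1}\}$.

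If $L$ is not constant, set $S_i=\{v_{i,1},v_{i,2},v_{i,3}\}$ for $i\in\{2,3,4\}$ and apply pigeonhole: every vertex of $G$ lies in at least two of the sets $V(G)\setminus S_i$, so there is an index $j$ (WLOG $j=2$) for which $L$ is non-constant on $V(G)\setminus S_2$. The induced subgraph $G[V(G)\setminus S_2]$ has $9$ vertices and is isomorphic to $[\Theta(2,4,4)]^2$. I would produce an equitable $L$-coloring $f$ of this subgraph by imitating Lemma~\ref{lem: 3m 3}'s second case one level deeper: remove an inner branch (say $S_3$), find a proper $L$-coloring of the $6$ remaining vertices using $6$ distinct colors (an SDR exists via Hall's theorem since each list has size $6$), then extend to the inner $K_3$-triangle $G[S_3]$ from restricted lists of sizes $\ge 3, 4, 3$.

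Extending $f$ to $S_2$ is the crux. For each $v\in S_2$, write $L''(v)=L(v)\setminus\{f(y):y\in N_G(v)\setminus S_2\}\setminus T$, where $T$ is the set of colors $f$ uses twice in $V(G)\setminus S_2$; the list sizes are $\ge 2-|T|$, $4-|T|$, and $2-|T|$ for $v_{2,1}$, $v_{2,2}$, $v_{2,3}$ respectively, and a proper $L''$-coloring of the triangle $G[S_2]=K_3$ requires an SDR on them. Since $|T|$ can reach $3$, the plan is to use the freedom in the preceding rainbow-and-extend step, in particular the flexibility granted by $|\bigcup_{v\in V(G)\setminus S_2}L(v)|\ge 7$ from the non-uniformity hypothesis, to pick the inner rainbow so that either $|T|$ is small or $T$ is largely disjoint from the short-supply lists $L(v_{2,1})$ and $L(v_{2,3})$. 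This bookkeeping is the main obstacle, and I anticipate it will require a further sub-case analysis based on which list differences appear in $V(G)\setminus S_2$.
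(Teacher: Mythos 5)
Your case (1) is fine as far as it goes: the explicit coloring you give is proper and uses each of the six colors exactly twice, so it is an equitable $L$-coloring when $L$ is constant on all of $V(G)$. The problem is everything you route into case (2). Because your split is ``constant on all of $V(G)$ or not,'' even the situation where all ten non-hub lists agree and only $L(u)$ (say) differs lands in case (2); there the union of the lists on $V(G)\setminus S_2$ can have as few as seven colors (e.g. $L\equiv\{1,\dots,6\}$ except $L(u)=\{1,2,3,4,5,7\}$). Since you delete only the three-vertex branch $S_2$, you must properly color the remaining nine vertices from $6$-lists, and with only seven colors available at least two colors are forced to be used twice. Those doubled colors (your set $T$) are then forbidden on all of $S_2$, while $v_{2,1}$ and $v_{2,3}$ each already lose up to four colors to their neighbors outside $S_2$; so the reduced lists on the clique $G[S_2]$ can have sizes as small as $0$ or fail Hall's condition. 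You see this yourself and defer it to ``bookkeeping'' about steering the rainbow-and-extend step so that $T$ is small or misses the short lists, but no argument is given that such steering is always possible --- and that is precisely the content of the lemma. As written, the proof is incomplete at its crux.

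For comparison, the paper avoids the set $T$ altogether by making two different choices. First, it splits on whether $L$ is constant on $V(G)\setminus\{u,w\}$, handling the ``agree off the hubs'' situation by an explicit coloring of the ten non-hub vertices followed by a careful completion at $u$ and $w$ (including the subcase where the two reduced hub lists are the same singleton). Second, in the non-constant case it deletes a five-vertex set containing both hubs, e.g. $S_1=\{u,w,v_{4,1},v_{4,2},v_{3,2}\}$, chosen from a family of three such sets whose complements cover $V(G)\setminus\{u,w\}$ and share a common vertex; non-constancy off the hubs then guarantees two distinct lists on some seven-vertex complement, so a rainbow (seven distinct colors) proper $L$-coloring exists by Hall. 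With a rainbow outside, no color has yet been used twice, and the extension to $S_1$ is done greedily in the order $w,u,v_{4,1},v_{3,2},v_{4,2}$, with the only possible repeat $h'(w)=h'(v_{4,1})$ shown to be harmless because $V(G)\setminus S_1\subseteq N_G(w)\cup N_G(v_{4,1})$. If you want to salvage your branch-deletion plan, you would need an argument of comparable strength controlling which colors get doubled on the nine-vertex part; the paper's choice of a larger deleted set sidesteps that difficulty entirely.
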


\begin{proof}
Suppose $L$ is an arbitrary $6$-assignment for $G$.  Note an equitable $L$-coloring of $G$ uses no color more than twice.  We will show that $G$ has an equitable $L$-coloring in two cases: (1) $L(v)$ is the same list for each $v \in V(G) - \{u,w \}$ and (2) there exist $x, y \in V(G) - \{u,w \}$ such that $L(x) \neq L(y)$. In case (1) we may suppose $L(v) = \{1,2,3,4,5, 6\}$ for each $v \in V(G) -\{u,w \}$.  Let $f$ be the proper $L$-coloring for $G - \{u,w\}$ given by: $f(v_{1,1}) = 2$, $f(v_{2,1}) = 3$, $f(v_{3,1}) = 4$, $f(v_{4,1}) = 5$, $f(v_{2,2}) = 2$, $f(v_{3,2}) = 6$, $f(v_{4,2}) = 6$, $f(v_{2,3}) = 1$, $f(v_{3,3}) = 5$, and $f(v_{4,3}) = 4$.  Then, let $L'(u) = L(u) - \{2,3,4,5,6\}$ and $L'(w) = L(w)- \{1,2,4,5,6\}$.  As long as $L'(u)$ and $L'(w)$ are not the same set of size 1, we can find a proper $L'$-coloring of $G[\{u,w\}]$ which along with $f$ completes an equitable $L$-coloring of $G$.  So, we assume that $L'(w) = L'(u) = \{7 \}$.  This means $L(u)=\{2,3,4,5,6,7\}$ and $L(w)=\{1,2,4,5,6,7\}$. An equitable $L$ coloring, $g$, of $G$ can then be constructed as follows: $g(u)= 7$, $g(v_{1,1}) = 2$, $g(v_{2,1}) = 3$, $g(v_{3,1}) = 4$, $g(v_{4,1}) = 5$, $g(v_{2,2}) = 2$, $g(v_{3,2}) = 3$, $g(v_{4,2}) = 6$, $g(v_{2,3}) = 6$, $g(v_{3,3}) = 5$, $g(v_{4,3}) = 4$, and $g(w) = 1$.  

For case (2), let $S_1=\{u,w,v_{4,1},v_{4,2},v_{3,2}\}$, $S_2=\{u,w,v_{3,1},v_{3,2},v_{2,2}\}$, and \\ $S_3=\{u,w,v_{2,1},v_{2,2},v_{4,2}\}$. Note that $(V(G)-S_1) \cup (V(G)-S_2) \cup (V(G)-S_3) = V(G)-\{u,w\}$ and $\bigcap_{i=1}^3 (V(G)-S_i) \neq \emptyset$. Thus, there must exist at least two elements in $\{ L(v) : v \in V(G)-S_1 \}$, $\{ L(v) : v \in V(G)-S_2 \}$, or $\{ L(v) : v \in V(G)-S_3 \}$. Assume without loss of generality that $\{ L(v) : v \in V(G)-S_1 \}$ has at least two elements. It is possible to find a proper $L$-coloring, $h$, of $G-S_1$ that uses seven distinct colors.  Let $L'(v) = L(v)- \{h(x): x \in N_G(v) - S_1\}$ for each $v \in S_1$. Note that $|L'(u)| \geq 2$, $|L'(w)| \geq 1$, $|L'(v_{4,1})| \geq 2$, $|L'(v_{3,2})| \geq 4$, $|L'(v_{4,2})| \geq 5$.  Now, we greedily construct a proper $L'$-coloring of $G[S_1]$, $h'$, that uses at least 4 distinct colors by coloring the vertices of $S_1$ in the order: $w, u, v_{4,1}, v_{3,2}, v_{4,2}$ (this is possible since $v_{4,1}$ is not adjacent to $w$ in $G$).  In the case $|h'(S_1)|=5$, $h$ together with $h'$ forms an equitable $L$-coloring of $G$.  So, we suppose that $|h'(S_1)|=4$. By the way $h'$ is constructed, it must be that $h'(w)=h'(v_{4,1}) = c$.  

Now, for the sake of contradiction, we suppose that $h$ together with $h'$ is not an equitable $L$-coloring of $G$.  This means that $c$ was used by $h$.  Note that $(V(G) - S_1) \subseteq (N_G(w) \cup N_G(v_{4,1}))$.  So, by the definition of $L'$, $c \notin L'(w)$ or $c \notin L'(v_{4,1})$ which is a contradiction. Thus $h$ along with $h'$ forms an equitable $L$-coloring of $G$.
\end{proof}      

We are now ready to prove two Lemmas that will complete the case of $k = m+2$ for $[\Theta(2,4, \ldots, 4)]^2$ and $[\Theta(4, \ldots, 4)]^2$.

\begin{lem}\label{lem: 3m}
Suppose $m \geq 5$.  Suppose $H=\Theta(l_1, l_2, \ldots, l_m)$ where $l_1 = 2$, and $l_2 =l_m = 4$. If $G = H^2$, then $G$ is equitably $(m+2)$-choosable.
\end{lem}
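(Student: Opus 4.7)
My plan is to apply Lemma~\ref{lem: KPW} with a carefully chosen set $S\subseteq V(G)$ of size $m+2$. A useful preliminary observation is that the three ``hub'' vertices $u$, $w$, $v_{1,1}$ are pairwise adjacent and each has degree $2m$ in $G$; a short double-counting over the ordering constraints $|N_G(x_i)-S|\le k-i$ shows that if all three hubs lie in $S$, then their positions $j_u,j_w,j_{v_{1,1}}$ must satisfy $j_u+j_w+j_{v_{1,1}} \le 10-m$, which is impossible for three distinct positive integers when $m\ge 5$. Hence $v_{1,1}$ must lie in $G-S$, not in $S$.

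My candidate is
\[
S = \{u,w\}\cup\{v_{m,1},v_{m,2},v_{m,3}\}\cup\{v_{i,2}:2\le i\le m-2\},
\]
which has size $m+2$, concentrating $S$ on the two common endpoints, the entirety of arm $m$, and the middle vertices of arms $2,\ldots,m-2$. Then $G-S$ contains $v_{1,1}$ along with the endpoint vertices $v_{i,1},v_{i,3}$ for $2\le i\le m-1$ and the single middle vertex $v_{m-1,2}$. I would verify that $\Delta(G-S)$ is small enough that $G-S$ admits an equitable $L$-coloring with color-class sizes at most $q=2$: the vertex $v_{1,1}$ loses the four neighbors $u,w,v_{m,1},v_{m,3}$ from $S$, while the $v_{i,j}$'s each have at most $m$ neighbors surviving in $G-S$. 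For smaller $m$, Theorem~\ref{thm: KKresult} applies directly; for larger $m$ one would argue using the ``double-clique plus matching plus pendant'' structure of $G-S$ (a union of the $v_{i,1}$-clique, the $v_{i,3}$-clique, the matching edges $v_{i,1}v_{i,3}$, and the star around $v_{1,1}$), which admits a direct greedy equitable coloring with lists of size $m+2$. The ordering would place $u,w$ at positions $1,2$, then $v_{m,1},v_{m,3}$ at positions $3,4$, the middle vertices $v_{i,2}$ for $2\le i\le m-2$ in some order, and $v_{m,2}$ (outside-$S$ count $0$) last.

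The main obstacle I expect is verifying the ordering bound $|N_G(x_i)-S|\le k-i$ at positions $3$ and $4$: the outside-$S$ counts of $v_{m,1}$ and $v_{m,3}$ sit close to the thresholds $k-3=m-1$ and $k-4=m-2$, because each is adjacent to $v_{1,1}$ and to the $m-2$ remaining members of its arm-endpoint clique. If these bounds cannot be met directly, the fix would be either to enlarge the arm-$m$ ``block'' in $S$ by swapping one middle vertex $v_{i,2}$ for an arm-$(m-1)$ endpoint (such as $v_{m-1,1}$ or $v_{m-1,3}$), thereby cutting the offending count by $1$, or to split into cases on the list assignment $L$ in the style of Lemmas~\ref{lem: 3m 3} and~\ref{lem: 3m 4}: if $L$ is constant on the non-hub vertices, an explicit cyclic coloring (hubs $\to$ three distinct colors, clique vertices $\to$ systems of distinct representatives, middles $\to$ greedy) yields the coloring; otherwise, an augmentation step exploits the list difference. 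This delicate balancing of the ordering constraints against the color-class cap of $3$ is the crux of the argument.
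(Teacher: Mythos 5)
There is a genuine gap: your main plan cannot satisfy the hypothesis of Lemma~\ref{lem: KPW}, and the repairs you sketch do not restore it. With your set $S=\{u,w,v_{m,1},v_{m,2},v_{m,3}\}\cup\{v_{i,2}:2\le i\le m-2\}$ one computes $|N_G(u)-S|=|N_G(w)-S|=m$ (each of $u,w$ keeps $v_{1,1}$, the vertex $v_{m-1,2}$, and $m-2$ first- or third-layer vertices outside $S$) and $|N_G(v_{m,1})-S|=|N_G(v_{m,3})-S|=m-1$ (each keeps $v_{1,1}$ and the $m-2$ vertices $v_{j,1}$, resp.\ $v_{j,3}$, with $2\le j\le m-1$). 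Hence $u$ and $w$ must occupy positions $1,2$, and both $v_{m,1}$ and $v_{m,3}$ must occupy positions at most $3$: four vertices competing for three slots, so no ordering of this $S$ satisfies $|N_G(x_i)-S|\le k-i$. Your first proposed fix makes things worse rather than better: removing a middle vertex $v_{i,2}$ from $S$ returns it to both $N_G(u)-S$ and $N_G(w)-S$, pushing one of those counts to $m+1$ (forcing position $1$), while the inserted vertex $v_{m-1,1}$ or $v_{m-1,3}$ itself has outside-$S$ count $m$ (forcing position at most $2$); again three vertices need the two earliest positions. There is also a secondary gap: your claim that $G-S$ is equitably $L$-colorable is unsupported for large $m$, since $\Delta(G-S)=2m-4>m+2$ once $m\ge 7$, so Theorem~\ref{thm: KKresult} does not apply, and here an equitable $L$-coloring of $G-S$ means color classes of size at most $2$, which a generic greedy argument does not guarantee.

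For comparison, the paper does not use Lemma~\ref{lem: KPW} at all in this case. It colors in three stages: $S_1=\{u\}\cup\{v_{i,1}:i\in[m]\}$ receives $m+1$ distinct colors, then $S_2=\{w\}\cup\{v_{i,3}:2\le i\le m\}$ receives $m$ distinct colors from the residual lists, so the partial coloring $f$ uses no color more than twice; the middle vertices $S_3=\{v_{i,2}:2\le i\le m\}$ then have residual lists $L''$ of size at least $m-2$, and the crux is a counting argument: if all these lists were a common set $A$ of size $m-2$ all of whose colors are used twice by $f$, then $|f^{-1}(A)|=2m-4$ while $|\bigcup_{i=2}^m N_G(v_{i,2})|=2m$ exceeds $|S_1\cup S_2|-|f^{-1}(A)|=5$, so some vertex of the union lies in $f^{-1}(A)$, contradicting the definition of $L''$; hence some $a\in A$ is used at most once and can be repeated on two middle vertices, with the remaining middles getting distinct colors from $A-\{a\}$. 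Your fallback (a case split on whether $L$ is constant, in the style of Lemmas~\ref{lem: 3m 3} and~\ref{lem: 3m 4}) gestures toward a direct coloring but contains neither this counting step nor a substitute for it, and the difficulty it must overcome concerns the residual lists $L''$ all coinciding, which can occur even when $L$ is far from constant; so as written the proposal does not close the case.
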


\begin{proof}
Suppose that $L$ is an arbitrary $(m+2)$-assignment for $G$.  We will construct an equitable $L$-coloring of $G$.  Since $m \geq 5$, in an equitable $L$-coloring of $G$, no color can be used more than $\lceil 3m/(m+2) \rceil = 3$ times. Let $S_1= \{u\} \cup \{v_{i,1}: i \in [m]\}$, $S_2 = \{w\} \cup \{v_{i,3}: 2 \leq i \leq m\}$, and $S_3 = \{v_{i,2}: 2 \leq i \leq m \}$. Note that $|S_1| = m+1$, $|S_2|=m$, and $|S_3| =m-1$. We begin by coloring the vertices in $S_1$ with $m+1$ distinct colors. For each $v \in S_2$ suppose that $L'(v)$ is obtained from $L(v)$ by deleting the colors used on the neighbors of $v$ in coloring the vertices in $S_1$. Note that $|L'(v)| \geq m$ for each $v \in S_2$. So, we can color each $v \in S_2$ with a color $c \in L'(v)$ such that the vertices in $S_2$ are colored with $m$ distinct colors.  

Now, for each $v \in S_1 \cup S_2$ let $f(v)$ be the color used to color $v$.  Note that $f$ uses at most $m$ colors twice, and $f$ uses no color more than twice.  For each $v\in S_3$ suppose that $L''(v) = L(v) - \{f(x) : x \in (S_1 \cup S_2) \cap N_G(v) \} $. Since each vertex in $S_3$ has degree 4 in $G$, $|L''(v)| \geq m-2$ for each $v \in S_3$.

If it is not the case that: $|L''(v)| =m-2$ for all $v \in S_3$ and $L''(v_{2,2}) = L''(v_{3,2}) = \cdots = L''(v_{m,2})$, then it is clear that there exists a proper $L''$-coloring of $G[S_3]$ that uses $m-1$ distinct colors which completes an equitable $L$-coloring of $G$. So, we assume that $|L''(v)| =m-2$ for all $v \in S_3$ and $L''(v_{2,2}) = L''(v_{3,2}) = \cdots = L''(v_{m,2})$.  Let $A = L''(v_{2,2})$.  For the sake of contradiction, we assume that all colors in $A$ are used twice by $f$. Note that $f^{-1}(A) \subseteq S_1 \cup S_2$ and $|f^{-1}(A)| = 2|A| = 2m-4$.  Since $|S_1 \cup S_2| =2m+1$, there are at most 5 vertices in $(S_1 \cup S_2) - f^{-1}(A)$. Note that 
$$\left |\bigcup_{i=2}^m N_G(v_{i,2}) \right| = 2m \geq 10 > 5.$$ 
So, there must be a $z \in S_1 \cup S_2$ that is in $f^{-1}(A) \cap \bigcup_{i=2}^m N_G(v_{i,2})$.  This however implies that for some $2 \leq i \leq m$, $f(z)$ was deleted from $L(v_{i,2})$ when forming $L''(v_{i,2})$ which implies $f(z) \notin L''(v_{i,2})$ which is contradiction. 

Thus, there must exist an element $a \in A$ that was not used twice by $f$. So, we can complete an equitable $L$-coloring of $G$ by coloring $v_{2,2}$ and $v_{3,2}$ with $a$ and coloring the remaining vertices in $S_3$ with the $m-3$ distinct colors in $A-\{a\}$. 
 \end{proof}

\begin{lem}\label{lem: 3m+2}
Suppose $m \geq 3$.  Suppose $H=\Theta(l_1, l_2, \ldots, l_m)$ where $l_1 =l_m = 4$. If $G = H^2$, then $G$ is equitably $(m+2)$-choosable.
\end{lem}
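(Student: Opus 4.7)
The plan is to mirror Lemma~\ref{lem: 3m}, adapted to the case $l_1 = 4$ (which forces $l_1 = \cdots = l_m = 4$). Partition $V(G) = S_1 \sqcup S_2 \sqcup S_3$ with $S_1 = \{u\} \cup \{v_{i,1}: i \in [m]\}$, $S_2 = \{w\} \cup \{v_{i,3}: i \in [m]\}$, and $S_3 = \{v_{i,2}: i \in [m]\}$. In $G$, each of $S_1$ and $S_2$ induces a clique of size $m+1$ (pairs among the $v_{i,1}$'s and among the $v_{i,3}$'s are at distance $2$ in $H$), while $S_3$ is an independent set of size $m$, and each $v_{i,2}$ has exactly the four neighbors $\{u, w, v_{i,1}, v_{i,3}\}$ in $G$. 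Since $|V(G)| = 3m+2$, any equitable $L$-coloring uses each color at most $\lceil (3m+2)/(m+2) \rceil = 3$ times for $m \geq 3$.

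I would first color $S_1$ greedily with $m+1$ distinct colors from $L$, then color $S_2$ greedily with $m+1$ distinct colors in the order $w, v_{1,3}, \ldots, v_{m,3}$; each $v_{i,3}$ needs to avoid only previously used $S_2$-colors and its unique $S_1$-neighbor $f(v_{i,1})$, and the list size $m+2$ guarantees at least one available color at every step. The resulting $f$ on $S_1 \cup S_2$ is proper and uses each color at most twice (once in each clique). For $v_{i,2} \in S_3$, define $L''(v_{i,2}) = L(v_{i,2}) - \{f(u), f(w), f(v_{i,1}), f(v_{i,3})\}$, giving $|L''(v_{i,2})| \geq m-2$.

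The crucial structural observation, stronger than the analogous step in Lemma~\ref{lem: 3m}, is that $\bigcup_{i=1}^m N_G(v_{i,2}) = S_1 \cup S_2$ exactly. Thus if $c = f(z)$ with $z \in S_1 \cup S_2$, then $z \in N_G(v_{j,2})$ for some $j$ and $c \notin L''(v_{j,2})$; in particular, every color in $\bigcap_i L''(v_{i,2})$ is unused by $f$. The plan is then to color $S_3$ via a system of distinct representatives (SDR) of $L''$: if one exists, each color is used at most $1 + 2 = 3$ times in total and the equitable $L$-coloring is complete. Otherwise, Hall's theorem yields $T \subseteq S_3$ with $|\bigcup_{v \in T} L''(v)| < |T|$, and since $|L''(v)| \geq m-2$, we must have $|T| \in \{m-1, m\}$, placing us into one of a few structural ``bad cases''.

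In the first bad case, $m-1$ of the lists $L''(v_{i,2})$ coincide with a common $(m-2)$-set $A$ (possibly including the $m$th list). If all $m$ lists equal $A$, the observation gives $a_c := 3 - |f^{-1}(c)| = 3$ for every $c \in A$, and the $m$ vertices can be assigned colors from $A$ with each color used at most three times, which is possible since $3(m-2) \geq m$ for $m \geq 3$. If only $m-1$ lists equal $A$, then applying the observation to those indices shows $f^{-1}(c) \subseteq \{v_{k,1}, v_{k,3}\}$ for the exceptional index $k$, whence $|f^{-1}(c)| \leq 1$ (using $v_{k,1} \sim v_{k,3}$) and $a_c \geq 2$; I then color the $m-1$ coinciding vertices from $A$ (one color twice, the rest once) and color $v_{k,2}$ from $L''(v_{k,2}) - A$, which is nonempty because $|L''(v_{k,2})| \geq |A|$ and $L''(v_{k,2}) \neq A$. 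In the final bad case, all $m$ lists lie inside some $B$ with $|B| = m-1$ but do not reduce to the previous cases; a careful accounting, using that each $c \in B$ has $n_c \leq 2$ (where $n_c$ counts the lists missing $c$) by the clique-injectivity of $f$ on $S_1$ and $S_2$, yields $a_c = 2$ when $n_c = 1$ and $a_c = 1$ when $n_c = 2$, so $\sum_{c \in B} a_c \geq 2m-3 \geq m$ and Hall's deficiency form certifies a valid $b$-matching from $S_3$ to $B$. The main obstacle will be this last case, in which one must verify Hall's condition on every subset $T \subseteq S_3$ via the precise interplay between the repeated ``missing-color'' values, the clique distinctness of $f$, and the structural observation.
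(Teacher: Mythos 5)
Your overall plan (color the two cliques $S_1$ and $S_2$ injectively, then finish the middle layer from the residual lists $L''$) is sound, your observation that $\bigcup_{i=1}^m N_G(v_{i,2}) = S_1 \cup S_2$ is correct, and your first two ``bad cases'' are handled correctly. The genuine gap is in the final bad case: the claim that every $c \in B$ satisfies $n_c \le 2$ ``by the clique-injectivity of $f$'' is false. A color can be missing from $L''(v_{i,2})$ simply because $c \notin L(v_{i,2})$ --- the original lists are arbitrary and differ from vertex to vertex --- so injectivity of $f$ on $S_1$ and $S_2$ only bounds the number of lists that lose $c$ \emph{because of $f$}, not $n_c$ itself; for instance $c$ may belong to only two of the original lists $L(v_{i,2})$, giving $n_c = m-2$. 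Since your accounting in this case ($a_c = 2$ when $n_c = 1$, $a_c = 1$ when $n_c = 2$, hence $\sum_c a_c \ge 2m-3$ and Hall) is built on that claim, and you yourself flag the subset-by-subset Hall verification as still to be done, the decisive case of the lemma is not actually proved as written.

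The case can, however, be closed along your lines. In the final case every list is $B$ or $B$ minus one color, so the only capacitated Hall constraints are $\sum_{c \in B} a_c \ge m$ and, for each $c$ with $n_c \ge 1$, $\sum_{c' \ne c} a_{c'} \ge n_c$. One always has $a_c \ge 1$ (no color is used more than twice by $f$), $a_c \ge 2$ when $n_c \le 1$, and $a_c = 3$ when $n_c = 0$, which gives $\sum_{c \in B} a_c \ge 3(m-1) - \sum_c n_c \ge 3(m-1) - m = 2m-3 \ge m$; and $n_c \le m-2$, because $n_c \ge m-1$ would force $m-1$ of the lists to equal $B - \{c\}$, i.e.\ your earlier coinciding-lists case, so $\sum_{c' \ne c} a_{c'} \ge m-2 \ge n_c$. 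By contrast, the paper avoids this analysis entirely: it places $v_{1,2}$ in the first clique (so $S_1$ receives $m+2$ distinct colors and only the $m-1$ vertices $v_{i,2}$, $2 \le i \le m$, remain uncolored), after which either $m-1$ distinct residual colors can be used, or all residual lists equal one $(m-2)$-set $A$, and a short counting contradiction (with a separate clique observation for $m=3$) produces a color of $A$ used at most once by $f$, which is then used twice to finish. Your partition buys the cleaner identity $\bigcup_i N_G(v_{i,2}) = S_1 \cup S_2$, but at the cost of the capacitated-matching argument that your proposal leaves incomplete and, in its stated form, incorrect.
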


\begin{proof}
Suppose that $L$ is an arbitrary $(m+2)$-assignment for $G$.  We will construct an equitable $L$-coloring of $G$.  Since $m \geq 3$, in an equitable $L$-coloring of $G$, no color can be used more than $\lceil (3m+2)/(m+2) \rceil = 3$ times. Let $S_1=\{u,v_{1,2}\} \cup \{v_{i,1}: i \in [m]\}$, $S_2=\{w\}  \cup \{v_{i,3}: i \in [m]\}$, and $S_3 = \{v_{i,2}: 2 \leq i \leq m\}$. Note that $|S_1| = m+2$, $|S_2|=m+1$, and $|S_3|=m-1$.  We begin by coloring the vertices in $S_1$ with $m+2$ distinct colors. For each $v \in S_2$ suppose that $L'(v)$ is obtained from $L(v)$ by deleting the colors used on the neighbors of $v$ in coloring the vertices in $S_1$. Note that $|L'(w)| \geq m+1$ and $|L'(v_{i,3})| \geq m$ for all $i \in [m]$. So, we can color each $v \in S_2$ with a color $c \in L'(v)$ such that the vertices in $S_2$ are colored with $m+1$ distinct colors.  

Now, for each $v \in S_1 \cup S_2$ let $f(v)$ be the color used to color $v$.  Note that $f$ uses at most $m+1$ colors twice, and $f$ uses no color more than twice.  For each $v\in S_3$ suppose that $L''(v) = L(v) - \{f(x) : x \in (S_1 \cup S_2) \cap N_G(v) \} $. Since each vertex in $S_3$ has degree 4 in $G$, $|L''(v)| \geq m-2$ for each $v \in S_3$. 

If it is not the case that: $|L''(v)| =m-2$ for all $v \in S_3$ and $L''(v_{2,2}) = L''(v_{3,2}) = \cdots = L''(v_{m,2})$, then it is clear that there exists a proper $L''$-coloring of $G[S_3]$ that uses $m-1$ distinct colors which completes an equitable $L$-coloring of $G$. So, we assume that $|L''(v)| =m-2$ for all $v \in S_3$ and $L''(v_{2,2}) = L''(v_{3,2}) = \cdots = L''(v_{m,2})$.  Let $A = L''(v_{2,2})$.  For the sake of contradiction, we assume that all colors in $A$ are used twice by $f$.  Note that $f^{-1}(A) \subseteq S_1 \cup S_2$ and $|f^{-1}(A)| = 2|A| = 2m-4$.  Since $|S_1 \cup S_2| =2m+3$, there are at most 7 vertices in $(S_1 \cup S_2) - f^{-1}(A)$. Note that 
$$\left |\bigcup_{i=2}^m N_G(v_{i,2}) \right| = 2m.$$
So, when $m \geq 4$, there must be a $z \in S_1 \cup S_2$ that is in $f^{-1}(A) \cap \bigcup_{i=2}^m N_G(v_{i,2})$, and we reach a contradiction as we did in the proof of Lemma~\ref{lem: 3m}.  When $m=3$ note that since $\{v_{1,1}$, $v_{1,2}$, $v_{1,3} \}$ is a clique in $G$, the single element in $A$ must be in $f(\bigcup_{i=2}^3 N_G(v_{i,2}))$.  So, when $m=3$ there must also  be a $z \in S_1 \cup S_2$ that is in $f^{-1}(A) \cap \bigcup_{i=2}^m N_G(v_{i,2})$, and we reach a contradiction as we did in the proof of Lemma~\ref{lem: 3m}. 

Finally, we can complete an equitable $L$-coloring of $G$ as we did in the proof of Lemma~\ref{lem: 3m}.
\end{proof}

We now complete the proof of Theorem~\ref{thm: theta} with two Lemmas.  The next Lemma will be important for proving the final Lemma which will address all remaining cases needed for Theorem~\ref{thm: theta} 

\begin{lem} \label{lem: extra edge}
Suppose $m \geq 3$.  Suppose $H=B(l_1,l_2,\ldots, l_m)$, where $l_m \geq 3$.  Suppose $G' = H^2$. Let $G$ be the graph obtained from $G'$ by adding an extra edge between the vertices $v_{a,l_a}$ and $v_{b,l_b}$ where $a$ and $b$ are chosen so that $1 \leq a < b \leq m$. Then, $G$ is equitably $(m+2)$-choosable.
\end{lem}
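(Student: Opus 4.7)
Let $L$ be an arbitrary $(m+2)$-assignment for $G$. The plan is to produce an equitable $L$-coloring via Lemma~\ref{lem: KPW}: choose a set $S\subseteq V(G)$ with $|S|=m+2$, show that $G-S$ has an equitable $L$-coloring using Theorem~\ref{thm: KKresult}, and then extend by suitably ordering the vertices of $S$. This mirrors the template used in the proof of Lemma~\ref{lem: m+2}, adjusted for the single additional edge between $v_{a,l_a}$ and $v_{b,l_b}$.

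The key observation is that if $S$ contains $u$ together with at least one endpoint of the extra edge, then $G-S$ is (a subgraph of) a disjoint union of path squares: removing $u$ disconnects the branches of $H$, and removing $v_{a,l_a}$ or $v_{b,l_b}$ kills the extra edge. Such a graph has maximum degree at most $4\leq(m+2)-1$, so Theorem~\ref{thm: KKresult} gives an equitable $L$-coloring of $G-S$. (In subcases where neither endpoint of the extra edge lies in $S$, one verifies directly that the extra edge inside $G-S$ still keeps $\Delta(G-S)\leq 4$.) Following Lemma~\ref{lem: m+2}, I would take $S=\{u,v_{m,2},v_{m,3},v_{m,4}\}\cup\{v_{i,1}:3\leq i\leq m\}$ when $l_m\geq 4$, and the analogous size-$(m+2)$ set when $l_m=3$.

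I would then split the argument into two cases based on whether the extra edge meets the longest branch. If $b<m$, the extra edge lives among branches other than $m$, and the ordering from Lemma~\ref{lem: m+2}'s proof works essentially verbatim: the extra edge can contribute $+1$ to $|N_G(x_i)-S|$ at most once (precisely when $l_a=1$ or $l_b=1$ with index at least $3$, so that $v_{a,l_a}$ or $v_{b,l_b}$ lies in $S$), and this $+1$ is absorbed by the slack in the original bounds. If $b=m$, the extra edge has endpoint $v_{m,l_m}$; when $l_m=3$ a small swap of the last two positions of the ordering (so that $v_{m,2}$ occupies $x_{m+2}$ rather than $v_{m,3}$) restores all the degree inequalities, while for $l_m\geq 4$ the original ordering survives the at-most-one $+1$ contribution.

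The main obstacle is the bookkeeping in the boundary subcases where $l_a$ or $l_b$ is $1$ or $2$: there the extra edge's endpoint may coincide with a vertex already forced into $S$, and the degree inequalities of Lemma~\ref{lem: KPW} must be checked across all resulting configurations. Apart from this case analysis and the small reshuffling of the ordering in the $b=m$ case, the argument follows the same blueprint as Lemma~\ref{lem: m+2}.
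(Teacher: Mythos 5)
Your proposal follows essentially the same route as the paper: a set $S$ of size $m+2$ built from $u$, the tail of the longest branch, and first vertices of branches, with Theorem~\ref{thm: KKresult} coloring $G-S$ and Lemma~\ref{lem: KPW} finishing; the paper's only substantive difference is that for $m\geq 4$ and $l_m\geq 4$ it omits $v_{a,1}$ and $v_{t,1}$ (with $t\notin\{a,b,m\}$) from $S$ rather than $v_{1,1},v_{2,1}$, which merely streamlines the boundary bookkeeping you defer, and your swapped ordering in the $l_m=3$ case is exactly the ordering the paper uses there. One small correction: deleting $u$ does \emph{not} disconnect the branches in $H^2$ (the surviving first vertices $v_{1,1}$ and $v_{2,1}$ stay adjacent), so $G-S$ need not be a disjoint union of path squares; however, the only fact you actually use, namely $\Delta(G-S)\leq 4\leq (m+2)-1$ so that Theorem~\ref{thm: KKresult} applies, remains true.
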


Note that this Lemma can be extended to $l_m =2$, but this is not necessary to complete the proof of Theorem~\ref{thm: theta}. 

\begin{proof}
Let $L$ be an arbitrary $(m+2)$-assignment for $G$. We will show that $G$ has an equitable $L$-coloring in the following cases: (1) $l_m=3$ and (2) $l_m\geq 4$.

In the case $l_m=3$, let $S=\{u\}\cup \{v_{i,1} : 2 \leq i \leq m\} \cup \{v_{m,2}, v_{m,3}\}$. Then, we name the vertices of $S$: $x_1, x_2, \ldots , x_{m+2}$ where $x_1=u$, $x_{m+2}=v_{m,2}$, $x_{m+1}=v_{m,3}$, and for each $2 \leq i \leq m$, $x_{i}=v_{i, 1}$. Note $\Delta (G-S) \leq 3$.  So, by Theorem~\ref{thm: KKresult}, $G-S$ is equitably $L$-colorable. It is easy to see that $|N_G(x_1)-S| = m \leq m+2-1$, $|N_G(x_{m+2})-S| =0$, $|N_G(x_{m+1})-S| \leq 1$, $|N_G(x_{m})-S| =1$, and for each $2\leq j \leq m-1$, $|N_G(x_{j})-S| \leq 3 \leq m+2-j$. Thus $G$ is equitably $L$-colorable by Lemma~\ref{lem: KPW}. 

For case (2), we will show that $G$ has an equitable $L$-coloring in the following sub-cases: (a) $m \geq 4$ and (b) $m=3$.  When $m\geq 4$, choose a $t$ such that $t \in [m]$, $t \neq a$, $t \neq b$, and $t \neq m$. Let 
$$S = \{u, v_{m,2}, v_{m,3}, v_{m,4}\} \cup (\{v_{i,1} : i \in [m]\}-\{v_{a,1},v_{t,1}\}).$$ 
Then we name the vertices of $S$: $x_1, x_2, \ldots , x_{m+2} $ where $x_1=u$, $x_{m+2}=v_{m,2}$, $x_{m+1}=v_{m,3}$, $x_{m}=v_{m,4}$, and $x_{m-1}=v_{m,1}$. Finally, we arbitrarily name the remaining vertices in $S$: $x_2, \ldots , x_{m-2}$ in an injective fashion. Note $\Delta(G-S) \leq 4$, and so by Theorem~\ref{thm: KKresult}, $G-S$ is equitably $L$-colorable. It is easy to see that $|N_G(x_1)-S|=m+1 \leq m+2-1$, $|N_G(x_{m+2})-S|=0$, $|N_G(x_{m+1})-S|\leq 1$, $|N_G(x_{m})-S|\leq 2$, and $|N_G(x_{m-1})-S|=2$. Finally, for each $2\leq j \leq m-2$, $|N_G(x_j)-S|\leq 4\leq m+2-j$. Thus, $G$ is equitably $L$-colorable by Lemma~\ref{lem: KPW}. 

For sub-case (b), $m=3$, and we let $S= \{u,v_{3,1},v_{3,2}, v_{3,3}, v_{3,4}\}$. Note that $\Delta (G-S) \leq 4$. So, by Theorem~\ref{thm: KKresult} we know that $G-S$ has an equitable $L$-coloring.  We name the vertices of $S$ as follows: $x_1 =u$, $x_2=v_{3,1}$, $x_3=v_{3,4}$, $x_4=v_{3,3}$, and $x_5=v_{3,2}$. Note $|N_G(x_1)-S| \leq 4$, $|N_G(x_2)-S| = 2$, $|N_G(x_3)-S| \leq 2$, $|N_G(x_4)-S| \leq 1$, and $|N_G(x_5)-S| = 0$. So, by Lemma~\ref{lem: KPW} we know that $G$ has an equitable $L$-coloring. 
\end{proof}

\begin{lem}\label{lem: l_m 6}
Suppose $m \geq 3$.  Suppose $H=\Theta(l_1, l_2, \ldots, l_m)$ where $l_1 \geq 2$, $l_2 \geq 4$, and $l_m \geq 6$. If $G = H^2$, then $G$ is equitably $(m+2)$-choosable.
\end{lem}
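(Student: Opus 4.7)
Plan: Given an arbitrary $(m+2)$-assignment $L$ for $G$, I will choose a specific set $S\subseteq V(G)$ of size $k=m+2$ concentrated near $w$, show that $G-S$ has an equitable $L$-coloring via Lemma~\ref{lem: extra edge}, and then apply Lemma~\ref{lem: KPW} to extend the coloring to all of $G$.

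The set $S$ will always contain $w$ together with the four consecutive vertices $v_{m,l_m-1},v_{m,l_m-2},v_{m,l_m-3},v_{m,l_m-4}$ at the $w$-end of the longest path (which exist since $l_m\ge 6$), along with $m-3$ further vertices of the form $v_{i,l_i-1}$. I take $i\in[m-3]$ generically; however, when $l_1=2$ and $m\ge 4$, I replace the index $1$ by $m-1$, so that the range becomes $i\in\{2,\ldots,m-3\}\cup\{m-1\}$. The reason for the replacement is that if $l_1=2$ then $v_{1,1}$ is adjacent in $H$ to both $u$ and $w$, giving it $G$-degree $2m$ and yielding $|N_G(v_{1,1})-S|=m+2$ if $v_{1,1}$ were included in $S$, which would forbid any legal placement of $v_{1,1}$ in the Lemma~\ref{lem: KPW} ordering.

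With this $S$, a direct verification shows that $H-S$ is a subdivision of $K_{1,m}$ centered at $u$ with every branch of length at least $1$, so $H-S$ is some $B(a_1,\ldots,a_m)$; and that $G-S$ equals $(H-S)^2$ together with exactly one extra edge, coming from the $H^2$-edge through $w$ that joins the two vertices at $H$-distance $1$ from $w$ which survive in $V(G)-S$ (the pair $\{v_{m-2,l_{m-2}-1},v_{m-1,l_{m-1}-1}\}$ generically, or $\{v_{1,1},v_{m-2,l_{m-2}-1}\}$ in the swapped case). In either case both endpoints of this extra edge are leaves of $H-S$, and the largest branch of $H-S$ has length at least $l_{m-2}-1\ge 3$, so Lemma~\ref{lem: extra edge} yields an equitable $L$-coloring of $G-S$.

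Finally, I will order $S$ by $x_1=w$, $x_{m+2}=v_{m,l_m-2}$, $x_{m+1}=v_{m,l_m-3}$, $\{x_{m-1},x_m\}=\{v_{m,l_m-1},v_{m,l_m-4}\}$, and the remaining $m-3$ vertices in positions $x_2,\ldots,x_{m-2}$. A direct count gives $|N_G(w)-S|=m+1$, $|N_G(v_{m,l_m-2})-S|=0$, $|N_G(v_{m,l_m-3})-S|=1$, $|N_G(v_{m,l_m-1})-S|=|N_G(v_{m,l_m-4})-S|=2$, and $|N_G(v_{i,l_i-1})-S|\le 4$ for each remaining vertex, from which $|N_G(x_i)-S|\le (m+2)-i$ holds for every $i$; Lemma~\ref{lem: KPW} then completes the proof. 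The main obstacle is precisely this neighborhood bookkeeping in $G-S$ (especially in the $l_1=2$ case, where the interactions of $v_{1,1}$ with both $u$ and $w$ force the swap out of $S$ and, in turn, a re-verification that the single missing edge in $(H-S)^2$ still connects two leaves of the underlying $B$-graph).
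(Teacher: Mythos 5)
Your proposal is correct and follows the same strategy as the paper's proof: delete a carefully chosen $(m+2)$-set $S$ so that $G-S$ is the square of a $B$-graph plus one edge joining the ends of two branches, color $G-S$ by Lemma~\ref{lem: extra edge}, and extend by Lemma~\ref{lem: KPW}. The only real difference is where you anchor the deletion: the paper takes $S=\{u,v_{m,2},v_{m,3},v_{m,4}\}\cup\{v_{i,1}:3\le i\le m\}$, so the leftover graph is a $B$-graph squared centered at $w$ with the extra edge between $v_{1,1}$ and $v_{2,1}$, and this works uniformly with no case analysis on $l_1$; you anchor at $w$, which forces the $l_1=2$ swap (correctly diagnosed, since otherwise $v_{1,1}$ would have $m+2$ neighbors outside $S$) and the attendant re-verification. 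Your neighborhood counts and the ``exactly one extra edge'' claim do check out in all cases, so the argument goes through; the one cosmetic slip is your justification that the largest branch of $H-S$ has length at least $l_{m-2}-1\ge 3$, which fails as stated when $m=3$ (there $l_{m-2}=l_1$ may be $2$), though the needed conclusion still holds because the branch coming from the second path has length $l_2-1\ge 3$. In short, both proofs buy the same thing; the paper's $u$-anchored deletion is just slightly cleaner because $u$'s neighborhood structure does not depend on whether $l_1=2$.
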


\begin{proof}
Let $L$ be an arbitrary $(m+2)$-assignment for $G$. We will show that $G$ has an equitable $L$-coloring.  We let 
$$S=\{u,v_{m,2},v_{m,3}, v_{m,4}\} \cup \{v_{i,1}: 3 \leq i \leq m\}.$$ 
There exist natural numbers $a_1, \ldots, a_m$ satisfying $a_1 \leq \cdots \leq a_m$ and $a_m \geq 3$ such that the following holds.  If $H = [B(a_1, \ldots, a_m)]^2$, then there are $i, j \in [m]$ satisfying $1 \leq i < j \leq m$ such that $G-S$ is $H$ plus an edge between the vertices $v_{i,a_i}$ and $v_{j,a_j}$.  Note that we know such an $i$ and $j$ exist since $v_{1,1}$ and $v_{2,1}$ are adjacent in $G-S$. Also note that we know $a_m \geq 3$ since $v_{2,1}$, $v_{2,2}$, and $v_{2,3}$ are in $G-S$. By Lemma~\ref{lem: extra edge} we know that $G-S$ has an equitable $L$-coloring. We name the vertices of $S$: $x_1, x_2, \ldots , x_{m+2}$ where $x_1 = u$, $x_{m-1} = v_{m,4}$, $x_m = v_{m,1}$, $x_{m+1} = v_{m,3}$ and $x_{m+2}= v_{m,2}$. Finally, if $m \geq 4$, we arbitrarily name the remaining vertices in $S$: $x_2, \ldots , x_{m-2}$ in an injective fashion. Note that $|N_G(x_1) -S| = m+1$, $|N_G(x_{m-1})-S| = 2 $, $|N_G(x_m)-S|=1$, $|N_G(x_{m+1})-S| =1$, $|N_G(x_{m+2})-S| =0$, and $|N_G(x_i)-S|\leq 4 \leq m+2-i$ for all $2\leq i \leq m-2$. Thus, $G$ has an equitable $L$-coloring by Lemma~\ref{lem: KPW}.
\end{proof}

{\bf Acknowledgment.} The authors would like to thank Hemanshu Kaul and Michael Pelsmajer for their helpful comments on this paper.

\end{document}